\crefname{remark}{Remark}{Remarks}
\Crefname{remark}{Remark}{Remarks}
\crefname{hypothesis}{Hypothesis}{Hypotheses}
\crefname{fact}{Fact}{Facts}
\newaliascnt{example}{theorem}
\newtheorem{example}[example]{\textit{Example}}
\crefname{example}{Example}{Examples}
\Crefname{example}{Example}{Examples}
\crefname{remark}{Remark}{Remarks}
\Crefname{remark}{Remark}{Remarks}
\crefname{enumi}{part}{parts}
\Crefname{enumi}{Part}{Parts}
\newcommand{\Gd}{d\mkern1mu}
\newcommand{\sGd}{\widehat{\mkern-2mu d}\mkern1mu}
\newcommand\sd[1][.5]{\mathbin{\vcenter{\hbox{\scalebox{#1}{\,$\bm{\wedge}$}}}}}
\newcommand{\sg}{\mathord{\raisebox{-0.05ex}{\rule{0pt}{1.3ex}\smash{\scalebox{1.37}[1.22]{\ensuremath{\blacktriangledown}}}}\mkern-1.2mu}}
\Crefname{ALC@unique}{Line}{Lines}
\title{Specular differentiation in normed vector spaces: quasi-mean value and quasi-Fermat theorems\thanks{Date: \today.
\funding{This work is supported in part by Michigan State University and the National Science Foundation Research Traineeship Program (DGE-2152014).
The author received partial support from Jun Kitagawa's National Science Foundation grant (DMS-2246606).}}
}
\author{
  Kiyuob Jung\,\orcidlink{0009-0007-5075-4061}\thanks{Department of Mathematics, Michigan State University, East Lansing, MI 48824 USA (\email{kyjung@msu.edu}).}
}
\DeclareMathOperator{\dom}{dom}
\begin{document}

\maketitle

\begin{abstract}
    This paper introduces specular differentiation, which generalizes G\^ateaux and Fr\'echet differentiation in normed vector spaces.
    We investigate its fundamental theoretical properties and establish weak forms of the Mean Value Theorem and Fermat's Theorem in the specular sense for real-valued functions.
    Finally, we identify a distinguished element of the Fr\'echet subdifferential of a convex function through specular differentiation.
\end{abstract}

\begin{keywords}
  generalized differentiation, Mean Value Theorem, Fermat's Theorem
\end{keywords}

\begin{MSCcodes}
  46G05, 46T20, 49J52
\end{MSCcodes}

\section{Introduction}

We introduce a novel approach to generalized differentiation in normed vector spaces, called \emph{specular differentiation}, which is distinct from existing approaches such as weak derivatives \cite{2003_Adams}, Dini derivatives \cite{1994_Bruckner_BOOK}, Clarke derivatives \cite{1990_Clarke_BOOK}, symmetric derivatives \cite{1967_Aull}, and subgradients \cite{1970_Rockafellar_BOOK}.
The definition of specular differentiation is intuitive: it is given by an angular mean of one-sided difference quotients and is naturally inspired by the optical principle governing the reflection of light.
Moreover, specular differentiability applies to a broad class of functions, including functions that are not classically differentiable.
For example, certain oscillatory functions can be specularly differentiable \cite{2026a_Jung}.

In certain nonsmooth settings, specular differentiability can provide a useful alternative when classical differentiability is unavailable or too restrictive.
Thus, specular differentiation offers a possible new tool for problems in which the differential structure of a nonsmooth function is otherwise difficult to exploit.
Indeed, numerical schemes based on specular derivatives have shown improved accuracy for certain ordinary differential equations \cite{2026a_Jung}, and specular gradient methods can minimize nonsmooth convex functions in cases where classical methods fail \cite{2026c_Jung}.

The concepts of the specular derivative in the one-dimensional Euclidean space $\mathbb{R}$ and the specular partial derivative in the $n$-dimensional Euclidean space $\mathbb{R}^n$ were first introduced in \cite{2023_Jung}. 
While a recent study \cite{2026a_Jung} proposed a revision to the original definition, it was restricted to $\mathbb{R}$. 
This paper adopts this refinement and generalizes the definition of specular differentiation to normed vector spaces.
As suggested in \cite{2026a_Jung}, we distinguish the definitions of specular differentiation in \cite{2023_Jung} and in \cite{2026a_Jung} by referring to the former as \emph{regular specular differentiation} and the latter as specular differentiation.

The primary motivation for studying specular differentiation lies in its numerical applications.
First, numerical methods based on specular differentiation can exhibit favorable numerical behavior in certain cases.
For example, \cite{2026a_Jung} showed that a numerical method based on specular differentiation can solve ODEs more accurately than classical schemes such as the explicit Euler, implicit Euler, and Crank--Nicolson methods.
In particular, the \emph{specular ellipse} scheme has zero local truncation error for ODEs whose solution trajectories are ellipses.

Furthermore, \cite{2026c_Jung} proposed numerical methods for minimizing nonsmooth convex functions in $\mathbb{R}^n$.
The numerical examples in \cite{2026c_Jung} show that standard methods, such as gradient descent \cite{2004_Boyd_BOOK}, Adam \cite{2017_Kingma}, and BFGS \cite{1970_Broyden,1970_Fletcher,1970_Goldfarb,1970_Shanno}, may fail to converge to a minimizer, whereas the proposed methods attain lower objective values.
This suggests that numerical methods based on specular differentiation may address some problems that classical methods cannot.

Second, specular differentiation is amenable to numerical computation.
For example, \cite{2026c_Jung} shows that gradients in the specular sense are subgradients of convex functions in $\mathbb{R}^n$.
Furthermore, we show that the Fr\'echet differential of a convex function in the specular sense belongs to the Fr\'echet subdifferential of the function.
This property allows us to bypass the often expensive step of computing an element of the subdifferential.
Such a computational shortcut is useful in subgradient-based methods for nonsmooth optimization.

We establish several theoretical results.
The Mean Value Theorem and Fermat's Theorem in normed vector spaces can be generalized via inequalities within the framework of specular differentiation, yielding the Quasi-Mean Value Theorem and the Quasi-Fermat Theorem for real-valued functions.
For the classical Mean Value Theorem, see, for example, \cite[Thm. 3.2.7]{2013_Drabek_BOOK}.
Mean value theorems for generalized differentiation can be found in \cite{2007_Schirotzek_BOOK}.
The one-dimensional Quasi-Mean Value Theorem in the specular sense was established in \cite[Thm. 2.16]{2026a_Jung}, and we generalize it here to normed vector spaces.
Regarding Fermat's Theorem in normed vector spaces, see \cite[Prop. 9.1.5]{2007_Schirotzek_BOOK} for generalized differentiation and \cite[Sect. 7.4]{1969_Luenberger} for classical differentiation.
The one-dimensional Fermat's theorem in the specular sense was established in \cite[Thm. 2.13]{2026a_Jung}.

The main results of this paper are summarized as follows.
First, we define specular differentiation and investigate the relationships between classical and specular differentiation in normed vector spaces; see \cref{prop:G_implies_spG}, \cref{prop:sF_implies_F}, and \cref{prop:spF_implies_spG}.
Second, we establish the Quasi-Mean Value Theorem (\cref{thm:QMVT_nvs_real_valued}) and Quasi-Fermat Theorem (\cref{thm:quasi-Fermat}) in the specular sense in normed vector spaces.
Third, we prove that the specular Fr\'echet differential of a convex function belongs to a Fr\'echet subdifferential of the function; see \cref{thm:sFd_is_Fs}.

\subsection{Definitions and notations}

In this subsection, we introduce the basic definitions and notation for \emph{specular directional derivatives}, \emph{specular G\^ateaux derivatives}, and \emph{specular Fr\'echet differentials}.
The geometric motivation for the formulas used in these definitions is deferred to \cref{sec:spdiff}.

Throughout this paper, we employ the following notations.
Let $X$ and $Y$ be normed vector spaces over $\mathbb{R}$ equipped with norms $\left\| \, \cdot \, \right\|_X$ and $\left\| \, \cdot \, \right\|_Y$, respectively.
Define a norm on $X \times Y$ by
\begin{equation}    \label{def:prod_norm}
    \left\| (x, y) \right\|_{X \times Y} := \left( \left\| x \right\|_X^2 + \left\| y \right\|_Y^2 \right)^{\frac{1}{2}}
\end{equation}
for $(x, y) \in X \times Y$.
This paper mainly concentrates on the case $Y = \mathbb{R}$ equipped with the absolute value norm $\left\| \, \cdot \, \right\|_Y = \left\vert \, \cdot \, \right\vert$.
In this case, we simply write $\left\| \, \cdot \, \right\|$ for the norm $\left\| \, \cdot \, \right\|_X$ if there is no confusion.
In the case where $X = \mathbb{R}^n$, the \emph{Euclidean norm} is denoted by $\left\| x \right\|_{\mathbb{R}^n} := \left( \sum_{i=1}^n x_i^2 \right)^{\frac{1}{2}}$ for $x = (x_1, x_2, \ldots, x_n) \in \mathbb{R}^n$.

Let $\mathcal{L}(X; Y)$ denote the space of continuous linear operators $\ell : X \to Y$, equipped with the operator norm 
\begin{displaymath}
    \left\| \ell \right\|_{\mathcal{L}(X; Y)} := \sup_{\left\| x \right\|_X = 1} \left\| \ell(x) \right\|_Y
\end{displaymath}
for $\ell \in \mathcal{L}(X; Y)$.
In particular, we denote by $X^{\ast} := \mathcal{L}(X; \mathbb{R})$ the \emph{dual space} of $X$.
The \emph{duality pairing} between $X$ and $X^{\ast}$ is the bilinear mapping $\left\langle \, \cdot \,, \, \cdot \,  \right\rangle : X^{\ast} \times X \to \mathbb{R}$, defined by $\left\langle \ell, x \right\rangle := \ell(x)$ for $\ell \in X^{\ast}$ and $x \in X$.

In the G\^ateaux approach, let $f : \Omega \to Y$ be a function, where $\Omega$ is an open subset of $X$.
The \emph{directional derivative} of $f$ at $x \in \Omega$ in the direction $v \in X$ is defined as
\begin{displaymath} 
    \partial_v f(x) := \lim_{h \to 0} \dfrac{f(x + hv) - f(x)}{h},
\end{displaymath}
provided the limit exists. 
Here, the limit is taken over nonzero real numbers $h$.
If $\partial_v f(x)$ exists for all $v \in X$ and there exists $\ell \in \mathcal{L}(X; Y)$ such that $\ell(v) = \partial_v f(x)$ for all $v \in X$, then $f$ is said to be \emph{G\^ateaux differentiable} at $x$.
If such an operator $\ell$ exists, we call it the \emph{G\^ateaux derivative} of $f$ at $x$ and write $\Gd f(x) := \ell$.
If $f$ is G\^ateaux differentiable at every point in $\Omega$, then we say $f$ is \emph{G\^ateaux differentiable} in $\Omega$.
We generalize this notion in the specular sense as follows.

\begin{definition}    \label{def: specular Gateaux derivative}
    Let $X$ and $Y$ be normed vector spaces over $\mathbb{R}$ equipped with norms $\left\| \, \cdot \, \right\|_X$ and $\left\| \, \cdot \, \right\|_Y$, respectively.
    Let $f : \Omega \to Y$ be a function, where $\Omega$ is an open subset of $X$.
    We define the \emph{specular directional derivative} $\partial_{v}^{\sd}f(x)$ of $f$ at $x \in \Omega$ in the direction of $v \in X \setminus \left\{ 0 \right\}$ as 
    \begin{multline} \label{eq:spG}
      \partial_{v}^{\sd}f(x) := \lim_{h \searrow 0} \left[ \left( \dfrac{f(x + hv) - f(x)}{h} \right) \dfrac{\left\| U \right\|_{X \times Y}}{\left\| U \right\|_{X \times Y} + \left\| V \right\|_{X \times Y}} \right. \\ 
      + \left. \left( \dfrac{f(x) - f(x - hv)}{h} \right) \dfrac{\left\| V \right\|_{X \times Y}}{\left\| U \right\|_{X \times Y} + \left\| V \right\|_{X \times Y}} \right],
    \end{multline}
    where 
    \begin{equation}    \label{def:pt_UV}
        U := (hv, f(x) - f(x - hv)) 
        \qquad\text{and}\qquad        
        V := (hv, f(x + hv) - f(x))
    \end{equation}
    for sufficiently small $h > 0$.    
    Here, the limit is taken over strictly positive real numbers $h$.
    If $v = 0$, then $\partial_{v}^{\sd}f(x)$ is defined to be zero.

    If $\partial_{v}^{\sd}f(x)$ exists for all $v \in X$ and there exists $\ell \in \mathcal{L}(X; Y)$ such that $\ell(v) = \partial_{v}^{\sd}f(x)$ for all $v \in X$, then we say $f$ is \emph{specularly G\^ateaux differentiable} at $x$.
    If such an operator $\ell$ exists, we call it the \emph{specular G\^ateaux derivative} of $f$ at $x$ and write $\sGd f(x) := \ell$.
    If $f$ is specularly G\^ateaux differentiable at every point in $\Omega$, then we say $f$ is \emph{specularly G\^ateaux differentiable} in $\Omega$.

    If $X = \mathbb{R}^n$ and $Y = \mathbb{R}$ are equipped with Euclidean norms, then we write
    \begin{displaymath}
        \dfrac{\partial^{\sd}}{\partial x_i} f(x) := \partial_{x_i}^{\sd}f(x) := \partial_{e_i}^{\sd}f(x)
    \end{displaymath}
    and call it the \emph{specular partial derivative} of $f$ at $x = (x_1, x_2, \ldots, x_n) \in \Omega$ with respect to the variable $x_i$, where $\left\{ e_i \right\}_{i=1}^n$ is the standard basis of $\mathbb{R}^n$. 
    If $n = 1$, write 
    \begin{displaymath}    
        \dfrac{d^{\sd}}{dx} f (x)  := f^{\sd}(x) := \partial_1^{\sd} f(x) \in Y,
    \end{displaymath}
    which we call the \emph{specular derivative} of $f$ at $x \in \Omega$.
\end{definition}

The LaTeX macro for the symbol $\sd$ is available in \cite{2026s_Jung_SIAM}.
The derivation of the formula \eqref{eq:spG} is deferred to \cref{sec:spdiff}.
Specular directional derivatives generalize directional derivatives; see \cref{prop:dd_implies_sdd}.

Turning to the Fr\'echet approach, we say a function $f : \Omega \to Y$ is \emph{Fr\'echet differentiable} at $x \in \Omega$ if there exists $\ell \in \mathcal{L}(X; Y)$ such that 
\begin{displaymath}   \label{eq: Frechet differentiability}
    \lim_{\left\| w \right\|_X \to 0} \dfrac{\left\| f(x + w) - f(x) - \ell(w) \right\|_Y}{\left\| w \right\|_X } = 0,
\end{displaymath}
or equivalently,
\begin{displaymath}
    \left\| f(x + w) - f(x) - \ell(w) \right\|_Y = o(\left\| w \right\|_X) 
    \qquad\text{as}\qquad
    \left\| w \right\|_X \to 0.
\end{displaymath}
If such an operator $\ell$ exists, we call it the \emph{Fr\'echet differential} of $f$ at $x$ and write $Df(x) := \ell$.
Recall that if $f$ is Fr\'echet differentiable at $x$, then $f$ is G\^ateaux differentiable at $x$, and $\Gd f(x)(v) = Df(x) (v)$ for all $v \in X$.
We generalize this notion as follows.

\begin{definition}  
    Let $X$ and $Y$ be normed vector spaces over $\mathbb{R}$ equipped with norms $\left\| \, \cdot \, \right\|_X$ and $\left\| \, \cdot \, \right\|_Y$, respectively. 
    Let $f : \Omega \to Y$ be a function, where $\Omega$ is an open subset of $X$.
    We say $f$ is \emph{specularly Fr\'echet differentiable} at $x \in \Omega$ if there exists $\ell \in \mathcal{L}(X; Y)$ such that 
    \begin{multline}  \label{eq: specularly Frechet differentiability}
      \lim_{\left\| w \right\|_X \to 0} \left\| \left( \dfrac{f(x + w) - f(x) - \ell(w)}{\left\| w \right\|_X} \right) \dfrac{\left\| J \right\|_{X \times Y}}{\left\| J \right\|_{X \times Y} + \left\| K \right\|_{X \times Y}}\right. \\
      \left. +  \left( \dfrac{f(x) - f(x - w) - \ell(w)}{\left\| w \right\|_X} \right) \dfrac{\left\| K \right\|_{X \times Y}}{\left\| J \right\|_{X \times Y} + \left\| K \right\|_{X \times Y}}  \right\|_Y = 0,
    \end{multline}
    or equivalently, 
    \begin{multline*}
      \left\| (f(x + w) - f(x) - \ell(w)) \, \dfrac{\left\| J \right\|_{X \times Y}}{\left\| J \right\|_{X \times Y} + \left\| K \right\|_{X \times Y}} \right. \\
      \left. + (f(x) - f(x - w) - \ell(w)) \, \dfrac{\left\| K \right\|_{X \times Y}}{\left\| J \right\|_{X \times Y} + \left\| K \right\|_{X \times Y}}  \right\|_Y = o(\left\| w \right\|_X)
    \end{multline*}
    as $\left\| w \right\|_X \to 0$, where 
    \begin{equation}   \label{def: points JK}
        J := (w, f(x) - f(x - w)) 
        \qquad\text{and}\qquad
        K := (w, f(x + w) - f(x))
    \end{equation}
    for $w \in X$.
    If such an operator $\ell$ exists, we call it the \emph{specular Fr\'echet differential} of $f$ at $x$ and write $\widehat{D}_x f(x) := \ell$. 
    If there is no confusion, we simply write $\widehat{D} f(x) := \widehat{D}_x f(x)$.
\end{definition}

If $f$ is G\^ateaux (resp. Fr\'echet) differentiable at every point in $\Omega$, then we say that $f$ is \emph{G\^ateaux} (resp. \emph{Fr\'echet}) \emph{differentiable} in $\Omega$. 
Analogously, if $f$ is specularly G\^ateaux (resp. specularly Fr\'echet) differentiable at every point in $\Omega$, we say that $f$ is \emph{specularly G\^ateaux} (resp. \emph{specularly Fr\'echet}) \emph{differentiable} in $\Omega$.

The specular G\^ateaux derivative and the specular Fr\'echet differential are unique if they exist; see
\cref{prop:uniqueness_sGd,prop:uniqueness_sFd}.
Specular Fr\'echet differentiability implies specular G\^ateaux differentiability; see \cref{prop:spF_implies_spG}.
Also, under suitable assumptions, specular Fr\'echet differentiability generalizes Fr\'echet differentiability; see \cref{prop:sF_implies_F}.
\cref{fig:differentiability} summarizes the relationships among Fr\'echet, G\^ateaux, specular Fr\'echet, and specular G\^ateaux differentiability.

\begin{figure}[H]
  \centering
  \begin{tikzcd}[
    row sep=3.5em, column sep=8em,
    nodes={align=center},
    execute at end picture={
      \draw[line width=0.5pt]
        ([xshift=-6pt,yshift=-6pt]current bounding box.south west)
        rectangle
        ([xshift=6pt,yshift=6pt]current bounding box.north east);
    }
  ]
    \begin{tabular}{c} Fr\'echet \\ differentiability \end{tabular}
    \arrow[r, Rightarrow, "\text{\small well-known}"{above, yshift=0.5ex}]
    \arrow[d, Rightarrow, "\text{\cref{prop:sF_implies_F}\,}"'] 
    & \begin{tabular}{c} G\^ateaux \\ differentiability \end{tabular}
    \arrow[d, Rightarrow, "\text{\small \, \cref{prop:G_implies_spG}}"] \\
    \begin{tabular}{c} specular Fr\'echet \\ differentiability \end{tabular}
    \arrow[r, Rightarrow, "\text{\small \cref{prop:spF_implies_spG}}"{below, yshift=-0.5ex}'] 
    & \begin{tabular}{c} specular G\^ateaux \\ differentiability \end{tabular}
  \end{tikzcd}
  \caption{Relations between classical and specular differentiability.}
  \label{fig:differentiability}
\end{figure}

In the case where $Y = \mathbb{R}$ is equipped with the absolute value $\left\vert \, \cdot \, \right\vert$, write the \emph{one-sided directional derivatives} of a functional $f: \Omega \to \mathbb{R}$ at $x \in \Omega$ in the direction of $v\in X$ as follows:
\begin{displaymath}
    \partial^+_v f(x) :=  \lim_{h \searrow 0} \dfrac{f(x + hv) - f(x)}{h}
    \qquad\text{and}\qquad
    \partial^-_v f(x) :=  \lim_{h \searrow 0} \dfrac{f(x) - f(x - hv)}{h},
\end{displaymath}
which may take values in the \emph{extended real number set} $\overline{\mathbb{R}} := \mathbb{R} \cup \left\{ -\infty, \infty \right\}$.

In the case where $X = \mathbb{R}$ equipped with the absolute value norm $\left\| \, \cdot \, \right\|_X = \left\vert \, \cdot \, \right\vert$, write $\partial^+ f(x) := \partial^+_1 f(x)$ and $\partial^- f(x) := \partial^-_1 f(x)$, which we call the \emph{right} and \emph{left derivatives} of $f$ at $x$, respectively.

Specular directional derivatives can be written in terms of one-sided directional derivatives; see \cref{thm:repr_sdd}.

In this paper we are also interested in subdifferentials.
Let $X$ be a normed vector space over $\mathbb{R}$ equipped with a norm $\left\| \, \cdot \, \right\|_X$.
Consider $Y = \overline{\mathbb{R}}$.
Let $f : X \to \overline{\mathbb{R}}$ be a functional.
The \emph{effective domain} of $f$ is defined by $\dom (f) := \left\{ x \in X : f(x) < \infty \right\}$.
We say $f$ is \emph{proper} if $\dom (f) \neq \varnothing$ and $f(x) > -\infty$ for all $x \in X$.

Let $x$ be in $\dom (f)$.
If $f$ is proper, the \emph{Fr\'echet subdifferential} of $f$ at $x$ is the set 
\begin{displaymath}
    \widehat{\partial} f(x) := \left\{ \ell \in X^{\ast}: \liminf_{w \to x} \dfrac{f(w)-f(x)-\left\langle \ell, w - x\right\rangle}{\|w - x\|_X} \geq 0\right\}.
\end{displaymath}
If $\widehat{\partial} f(x) \neq \varnothing$, then $f$ is said to be \emph{Fr\'echet subdifferentiable} at $x$.
For Fr\'echet subdifferentials, we refer the reader to \cite{2003_Kruger,2006_Mordukhovich_BOOK,2006_Mordukhovich,2007_Schirotzek_BOOK}.
If $f$ is proper and convex, the \emph{subdifferential} of $f$ at $x$ is the set 
\begin{displaymath}
    \partial f(x) := \left\{ \ell \in X^{\ast} : f(w) \geq f(x) + \left\langle \ell, w - x \right\rangle \text{ for all } w \in X \right\}.
\end{displaymath}
If $\partial f(x) \neq \varnothing$, then $f$ is said to be \emph{subdifferentiable} at $x$.
On subdifferentials, we refer the reader to \cite{1990_Clarke_BOOK,2006_Mordukhovich_BOOK,2018_Nesterov_BOOK,1970_Rockafellar_BOOK,1998_Rockafellar}.

One of the main results of this paper is to prove that a specular Fr\'echet differential belongs to a (Fr\'echet) subdifferential of a convex function; see \cref{thm:sFd_is_Fs}.

Let $X = H$ be a Hilbert space over $\mathbb{R}$ equipped with a norm $\left\| \, \cdot \, \right\|_H$ and an inner product $\left\langle \, \cdot \, , \, \cdot \, \right\rangle _H$, and let $Y = \mathbb{R}$ be equipped with the absolute value norm $\left\vert \, \cdot \, \right\vert$.
In this paper, we do not omit the subscript $H$ on the inner product to distinguish it from the duality pairing.
Let $\Omega$ be an open subset of $H$.
Suppose that a functional $f : \Omega \to \mathbb{R}$ has a Fr\'echet differential $Df(x) \in H^{\ast}$ at $x \in \Omega$.
By the Riesz representation theorem, there exists a unique vector $x_f \in H$ such that 
\begin{displaymath}
    \left\langle Df(x), w \right\rangle = \left\langle x_f, w \right\rangle_H
\end{displaymath}
for all $w \in H$.
The vector $x_f =: \nabla f(x)$ is called the \emph{gradient} of $f$ at $x$.
The same identification applies to the specular differential: whenever
$\widehat{D}f(x)\in H^{\ast}$, the Riesz representation theorem allows us to identify $\widehat{D}f(x)$ uniquely with a vector in $H$.
Thus, we can generalize the notion of gradient in the sense of specular differentiation as follows:
\begin{definition}
    Let $H$ be a Hilbert space over $\mathbb{R}$ equipped with a norm $\left\| \, \cdot \, \right\|_H$ and an inner product $\left\langle \, \cdot \, , \, \cdot \, \right\rangle _H$.
    Assume that a functional $f : \Omega \to \mathbb{R}$ is specularly Fr\'echet differentiable at $x \in \Omega$, where $\Omega$ is an open subset of $H$.
    We define the \emph{specular gradient} of $f$ at $x$ as the unique vector $\sg_x f(x) \in H$ such that 
    \begin{equation}    \label{eq: specular gradient}
        \left\langle \widehat{D} f(x), w \right\rangle = \left\langle \sg_x f(x), w \right\rangle_H
    \end{equation}
    for all $w \in H$.
    If there is no confusion, we simply write $\sg f(x) := \sg_x f(x)$.
\end{definition}

The LaTeX macro for the symbol $\sg$ is available in \cite{2026s_Jung_SIAM}.
Therefore, the specular differential operator can be uniquely identified with a vector in $H$.

Given $x \in \Omega$, if $\ell \in \widehat{\partial} f(x)$ (resp. $\ell \in \partial f(x)$), then the unique vector $x_{f} \in H$ such that 
\begin{displaymath}
    \left\langle \ell, w \right\rangle = \left\langle x_f, w \right\rangle_H
\end{displaymath}
for all $w \in H$ is called the \emph{Fr\'echet subgradient} (resp. \emph{subgradient}) of $f$ at $x$.

Finally, \cite{2026a_Jung} introduced three auxiliary functions to express long expressions of specular differentiation more effectively.
First, define the smooth function $\mathcal{A}: \overline{\mathbb{R}} \times \overline{\mathbb{R}} \to \overline{\mathbb{R}}$ by
\begin{equation} \label{def:A}
    \mathcal{A}(\alpha, \beta) := 
    \begin{cases}
        \left( \frac{\alpha}{\sqrt{1 + \alpha^2}} + \frac{\beta}{\sqrt{1 + \beta^2}} \right) \left( \frac{1}{\sqrt{1 + \alpha^2}} + \frac{1}{\sqrt{1 + \beta^2}} \right)^{-1}
        & \text{if $\alpha, \beta \in \mathbb{R}$,}  \\[1em]
        \alpha \pm \sqrt{1 + \alpha^2} 
        & \text{if $\alpha \in \mathbb{R}$, $\beta = \pm \infty$,}   \\
        \beta \pm \sqrt{1 + \beta^2} 
        & \text{if $\alpha = \pm \infty$, $\beta \in \mathbb{R}$,}  \\
        0 
        & \text{if $\alpha = \pm \infty, \beta = \mp \infty$,} \\
        \pm \infty 
        & \text{if $\alpha = \beta = \pm \infty$}  
    \end{cases}
\end{equation}
for $(\alpha, \beta) \in \overline{\mathbb{R}} \times \overline{\mathbb{R}}$.
Second, define the function $\mathcal{B} : \mathbb{R} \times \mathbb{R} \times (0, \infty) \to \mathbb{R}$ by 
\begin{equation}    \label{def:B}
    \mathcal{B}(a, b, c) := \dfrac{a \sqrt{b^2 + c^2} + b \sqrt{a^2 + c^2}}{c \sqrt{a^2 + c^2} + c \sqrt{b^2 + c^2}}
\end{equation}
for $(a, b, c) \in \mathbb{R} \times \mathbb{R}$.
Third, define the function $\mathcal{C} : \overline{\mathbb{R}} \times \overline{\mathbb{R}} \to \overline{\mathbb{R}}$ by 
\begin{equation}    \label{def:C}
    \mathcal{C}(\alpha, \beta) :=  \displaystyle \tan \left( \frac{1}{2} \arctan \alpha + \frac{1}{2} \arctan \beta \right) 
\end{equation}
for $(\alpha, \beta) \in \overline{\mathbb{R}} \times \overline{\mathbb{R}}$.

The parameters $\alpha$ and $\beta$ are intended to represent one-sided G\^ateaux derivatives.
See \cite[App. A]{2026a_Jung} for analysis of the functions $\mathcal{A}$, $\mathcal{B}$, and $\mathcal{C}$.
The values of $\mathcal{A}$ for extended real arguments are motivated by \cite[Rem. 2.7]{2026a_Jung}.
These functions are particularly useful in formulating specular G\^ateaux derivatives, specular Fr\'echet differentials, and their estimates; see \cref{cor:repr_sdd_A} and \cref{lem: representation with A of the specular Frechet differentials}.
Note that, for each $(a, b, c) \in \mathbb{R} \times \mathbb{R} \times (0, \infty)$, it holds that 
\begin{equation}    \label{eq:ABC}
    \mathcal{A}\left( \frac{a}{c}, \frac{b}{c} \right)
    = \mathcal{B}(a, b, c)
    = \mathcal{C}\left( \frac{a}{c}, \frac{b}{c} \right)
\end{equation}
by \cite[Lem. A.4]{2026a_Jung}.
Also, observe that, for each $(\alpha, \beta) \in \overline{\mathbb{R}} \times \overline{\mathbb{R}}$, it holds that 
\begin{equation}    \label{eq:AB}
    \mathcal{A}(\alpha, \beta) = \mathcal{C}(\alpha, \beta)
\end{equation}
by \cite[Lem. B.2]{2026a_Jung}.

\subsection{Organization}

The remainder of this paper is organized as follows.
\Cref{sec:spdiff} begins by exploring the geometric intuition that motivates the definition of specular directional derivatives, along with a derivation of the corresponding formula.
Building upon this foundation, \cref{sec:sdd,sec:sGd,sec:sFd} sequentially introduce specular directional derivatives, G\^ateaux derivatives, and Fr\'echet differentials.
These concepts are presented under progressively stronger differentiability assumptions, with each stage addressing both vector-valued and real-valued cases.
A central theoretical contribution of this development is presented in \cref{sec:sGd}, where we establish the quasi-Mean Value Theorem and the quasi-Fermat theorem for real-valued functions.
Finally, in \cref{sec:sFd}, we prove that the specular Fr\'echet differential of a convex function belongs to a Fr\'echet subdifferential of the function.

\section{Specular differentiation} \label{sec:spdiff}

Here, we explain what motivates the formula \eqref{eq:spG}.
This question can be answered by considering $X = \mathbb{R}^n$ and $Y = \mathbb{R}$.
Let $f : \Omega \to \mathbb{R}$ be a function, where $\Omega$ is an open subset of $\mathbb{R}^n$.
Fix $x \in \Omega$ and $v \in \mathbb{R}^n$.
Based on \cref{def: specular Gateaux derivative}, we further denote $A := (x - hv, f(x))$, $C := (x, f(x))$, $E := (x + hv, f(x))$, $L := (x - hv, f(x - hv))$, $R := (x + hv, f(x + hv))$ for $h > 0$.
\Cref{fig:motivation of the definition of specular directional derivatives} illustrates the situation.
Note that 
\begin{displaymath}
    \left\vert \overline{AC} \right\vert
    = \left\| A - C \right\|_{\mathbb{R}^n \times \mathbb{R}} 
    = h \left\| v \right\|_{\mathbb{R}^n}
    = \left\| C - E \right\|_{\mathbb{R}^n \times \mathbb{R}}
    = \left\vert \overline{CE} \right\vert.
\end{displaymath}
Let $r > 0$ be such that $0 < r < h \left\| v \right\|_{\mathbb{R}^n}$.
Let $P$ and $Q$ be the intersection points in $\mathbb{R}^n \times \mathbb{R}$ of the line segments $\overline{LC}$ and $\overline{RC}$ with the sphere $B(C, r)$ centered at $C$ with radius $r$, i.e.,
\begin{displaymath}
    \partial B(C, r) = \left\{ (y, s) \in \mathbb{R}^n \times \mathbb{R} : \left\| C - (y, s) \right\|_{\mathbb{R}^n \times \mathbb{R}} = r\right\}.
\end{displaymath}
Let $B$ and $D$ be the feet of the perpendiculars from $P$ and $Q$ to the line $\overline{AE}$, respectively.
Then, triangles $\triangle CAL$, $\triangle CBP$, $\triangle CDQ$, and $\triangle CER$ are right triangles with right angles at $A$, $B$, $D$, and $E$, respectively.
Let $F$ and $G$ be the intersection points of the sphere $\partial B(C, r)$ and the line segment that passes through $C$ and is parallel to the line $\overline{PQ}$.

The term ``specular'' comes from the behavior of light, which reflects from a smooth surface at the same angle at which it arrives.
The lines $\overline{LC}$ and $\overline{CR}$ represent the incident ray and the reflected ray, respectively, and the line $\overline{FG}$ represents a mirror.
Then the angles the rays make with the mirror are equal, i.e., $\angle LCF = \angle RCG$.

\begin{figure}[htbp]
    \centering
    \includegraphics[width=1\textwidth]{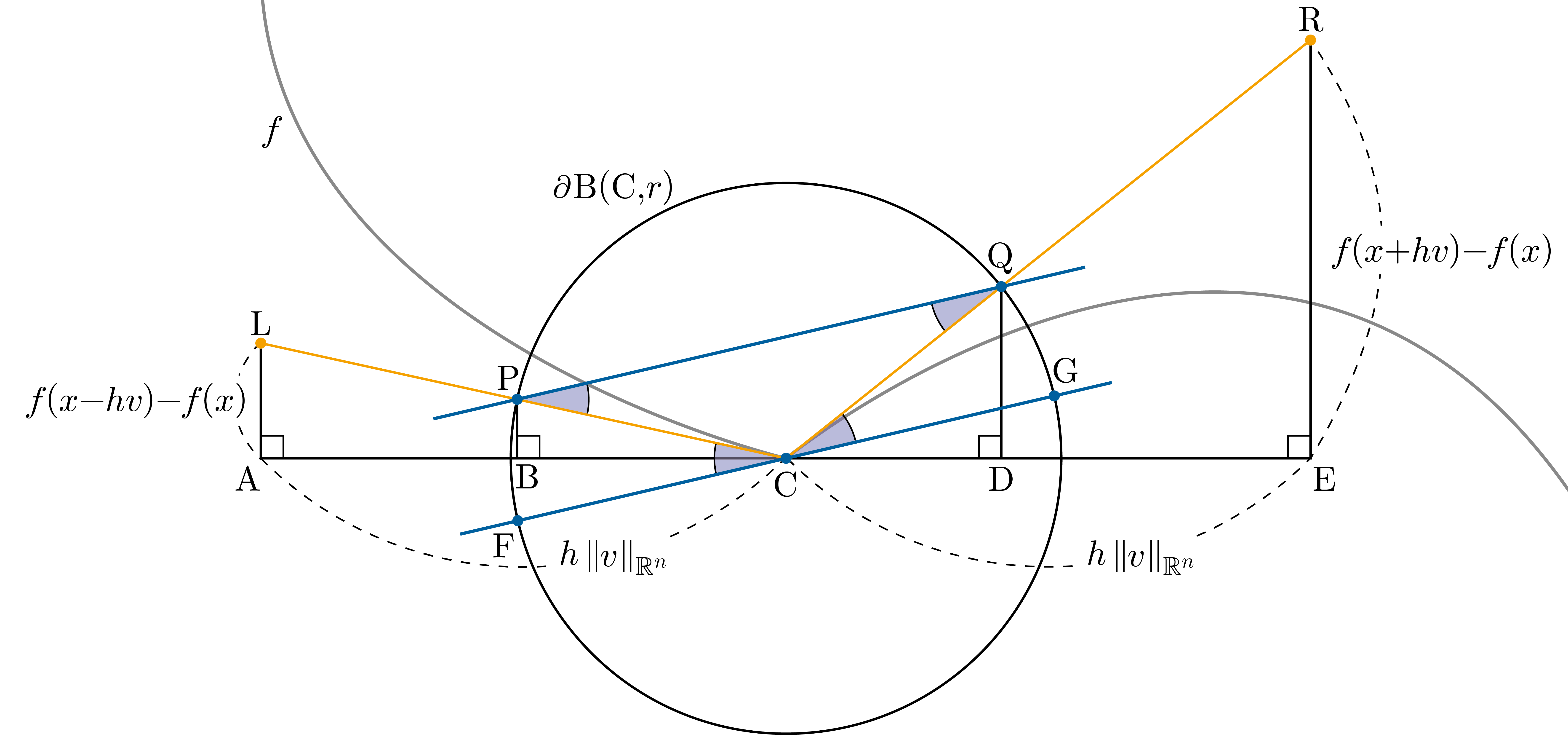}
    \vspace{-1.5\baselineskip}
    \caption{Motivation of the definition of specular directional derivatives.}
    \label{fig:motivation of the definition of specular directional derivatives}
\end{figure}

We want to find the slope of the line segment $\overline{PQ}$, which converges to $\partial^{\sd}_v f(x)$ as $h \searrow 0$.
Note that the angles $\angle LCF$, $\angle RCG$, $\angle CQP$, and $\angle CPQ$ are equal.
Since $\triangle CDQ$ and $\triangle CER$ are similar, it holds that
\begin{displaymath}
    \dfrac{\left\| C - R \right\|_{\mathbb{R}^n \times \mathbb{R}}}{r} 
    = \dfrac{\left\vert \overline{CR} \right\vert }{\left\vert \overline{CQ} \right\vert } 
    = \dfrac{\left\vert \overline{CE} \right\vert }{\left\vert \overline{CD} \right\vert } 
    = \dfrac{h \left\| v \right\|_{\mathbb{R}^n}}{\left\vert \overline{CD} \right\vert}
\end{displaymath}
and 
\begin{displaymath}
    \dfrac{\left\| C - R \right\|_{\mathbb{R}^n \times \mathbb{R}}}{r} 
    = \dfrac{\left\vert \overline{CR} \right\vert }{\left\vert \overline{CQ} \right\vert } 
    = \dfrac{\left\vert \overline{ER} \right\vert }{\left\vert \overline{DQ} \right\vert } 
    = \dfrac{|f(x + hv) - f(x)|}{\left\vert \overline{DQ} \right\vert},
\end{displaymath}
which implies that 
\begin{displaymath}
    \left\vert \overline{CD} \right\vert = \dfrac{r h \left\| v \right\|_{\mathbb{R}^n}}{\left\| C - R \right\|_{\mathbb{R}^n \times \mathbb{R}}}  
    \qquad\text{and}\qquad
    \left\vert \overline{DQ} \right\vert = \dfrac{r \, |f(x + hv) - f(x)|}{\left\| C - R \right\|_{\mathbb{R}^n \times \mathbb{R}}}.
\end{displaymath}
Similarly, using the fact that $\triangle CBP$ and $\triangle CAL$ are similar, one can find that 
\begin{displaymath}
    \left\vert \overline{BC} \right\vert = \dfrac{r h \left\| v \right\|_{\mathbb{R}^n}}{\left\| C - L \right\|_{\mathbb{R}^n \times \mathbb{R}}}  
    \qquad\text{and}\qquad
    \left\vert \overline{BP} \right\vert = \dfrac{r \, |f(x) - f(x - hv)|}{\left\| C - L \right\|_{\mathbb{R}^n \times \mathbb{R}}}.
\end{displaymath}
To account for the sign of the differences, we remove the absolute values in $|f(x + hv) - f(x)|$ and $|f(x) - f(x - hv)|$, and as a result, the slope of $\overline{PQ}$ is
\begin{multline*}
  \dfrac{\overline{DQ} - \overline{BP}}{\left\vert \overline{CD} \right\vert + \left\vert \overline{BC} \right\vert}
    = \left( \dfrac{f(x + hv) - f(x)}{h \left\| v \right\|_{\mathbb{R}^n}} \right)\dfrac{\left\| C - L \right\|_{\mathbb{R}^n \times \mathbb{R}}}{\left\| C - L \right\|_{\mathbb{R}^n \times \mathbb{R}} + \left\| C - R \right\|_{\mathbb{R}^n \times \mathbb{R}}}\\
    + \left( \dfrac{f(x) - f(x - hv)}{h \left\| v \right\|_{\mathbb{R}^n}} \right)\dfrac{\left\| C - R \right\|_{\mathbb{R}^n \times \mathbb{R}}}{\left\| C - L \right\|_{\mathbb{R}^n \times \mathbb{R}} + \left\| C - R \right\|_{\mathbb{R}^n \times \mathbb{R}}},
\end{multline*}
which does not depend on the choice of $r$.
By ignoring the factor $\left\| v \right\|_{\mathbb{R}^n}^{-1}$, as is done in the definition of classical derivatives, we obtain the fraction in \eqref{eq:spG} since $U = C - L$ and $V = R - C$.

\section{Specular directional derivatives}  \label{sec:sdd}
 
In this section, we study the relationships between specular and classical directional derivatives.
We also derive representations of specular directional derivatives of real-valued functions.
Under suitable assumptions, specular directional derivatives can be expressed in terms of one-sided directional derivatives, yielding three corollaries.

\subsection{Vector-valued functions}

Specular directional derivatives generalize classical directional derivatives.

\begin{proposition}  \label{prop:dd_implies_sdd}
    Let $X$ and $Y$ be normed vector spaces over $\mathbb{R}$ equipped with norms $\left\| \, \cdot \, \right\|_X$ and $\left\| \, \cdot \, \right\|_Y$, respectively.
    Let $f : \Omega \to Y$ be a function, where $\Omega$ is an open subset of $X$.
    Let $x \in \Omega$ and $v \in X$ be fixed.
    If the directional derivative $\partial_v f(x)$ exists, then the specular directional derivative $\partial^{\sd}_v f(x)$ exists with $\partial^{\sd}_v f(x) = \partial_v f(x)$.
\end{proposition}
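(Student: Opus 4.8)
The plan is to recognize the bracketed expression in \eqref{eq:spG} as a convex combination of the forward and backward difference quotients of $f$, both of which converge to $\partial_v f(x)$ as soon as the two-sided directional derivative exists; convergence of the specular directional derivative then follows from the robustness of limits under convex combinations. First I would dispose of the trivial case $v = 0$: here $\partial_v^{\sd} f(x) = 0$ by definition, while $\partial_0 f(x) = \lim_{h \to 0}(f(x)-f(x))/h = 0$, so the two coincide. Hence assume $v \neq 0$.

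Next I would extract the two one-sided consequences of the hypothesis. Set $L_0 := \partial_v f(x) \in Y$ and abbreviate the forward and backward difference quotients by
\[
  a_h := \frac{f(x+hv) - f(x)}{h}, \qquad b_h := \frac{f(x) - f(x-hv)}{h}.
\]
Letting $h \searrow 0$ in the definition of $\partial_v f(x)$ gives $a_h \to L_0$; substituting $h \mapsto -h$ in the same two-sided limit and again letting $h \searrow 0$ gives $b_h \to L_0$. Thus both quotients occurring in \eqref{eq:spG} converge to the common limit $L_0$.

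The structural observation is that, expanding the product norm \eqref{def: product norm} at the points \eqref{def: points LCR},
\[
  \left\| L - C \right\|_{X \times Y} = \left( h^2 \left\| v \right\|_X^2 + \left\| f(x-hv) - f(x) \right\|_Y^2 \right)^{\frac{1}{2}} \geq h \left\| v \right\|_X,
\]
and likewise $\left\| C - R \right\|_{X \times Y} \geq h \left\| v \right\|_X$. Since $v \neq 0$ and $h > 0$, the denominator $\left\| L - C \right\|_{X \times Y} + \left\| C - R \right\|_{X \times Y}$ is strictly positive, so the two coefficients in \eqref{eq:spG} are well-defined, nonnegative, and sum to $1$. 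Writing $\lambda_h := \left\| L - C \right\|_{X \times Y} / \left( \left\| L - C \right\|_{X \times Y} + \left\| C - R \right\|_{X \times Y} \right) \in [0,1]$, the bracketed expression is exactly $\lambda_h a_h + (1 - \lambda_h) b_h$. By the triangle inequality,
\[
  \left\| \lambda_h a_h + (1 - \lambda_h) b_h - L_0 \right\|_Y \leq \lambda_h \left\| a_h - L_0 \right\|_Y + (1 - \lambda_h) \left\| b_h - L_0 \right\|_Y \leq \max\left( \left\| a_h - L_0 \right\|_Y, \left\| b_h - L_0 \right\|_Y \right),
\]
which tends to $0$ as $h \searrow 0$ regardless of the behavior of $\lambda_h$. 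This yields both the existence of $\partial_v^{\sd} f(x)$ and the equality $\partial_v^{\sd} f(x) = L_0 = \partial_v f(x)$.

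The only point requiring genuine care is confirming that the weights $\lambda_h$ form a valid convex-combination coefficient for every small $h > 0$, i.e. that the denominator never vanishes; this is precisely where the hypothesis $v \neq 0$ enters, through the lower bounds $\left\| L - C \right\|_{X \times Y}, \left\| C - R \right\|_{X \times Y} \geq h \left\| v \right\|_X$. Once this is secured, no control over the weights themselves is needed, since convergence to a common limit is preserved under arbitrary convex combinations. I therefore expect no substantive obstacle beyond this well-definedness check.
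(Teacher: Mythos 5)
Your proposal is correct and follows essentially the same route as the paper: both identify the bracketed expression in \eqref{eq:spG} as a convex combination of the forward and backward difference quotients, note that each converges to $\partial_v f(x)$, and conclude via the triangle inequality that the combination converges to the same limit regardless of the weights. Your explicit treatment of the case $v=0$ and of the positivity of the denominator are minor additions the paper leaves implicit, but the argument is the same.
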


\begin{proof}
    If $v = 0$, then $\partial_v f(x) = 0$ and $\partial^{\sd}_v f(x) = 0$.
    Thus, assume that $v\neq 0$.
    Let $\varepsilon > 0$ be arbitrary.
    For sufficiently small $h > 0$, write 
    \begin{displaymath}
        a := \left\| U \right\|_{X \times Y}, \qquad 
        b := \left\| V \right\|_{X \times Y},
        \qquad\text{and}\qquad        
        c := \partial_v f(x) \in Y, 
    \end{displaymath}
    where $U$ and $V$ are defined as in \eqref{def:pt_UV}.
    Then, $a \geq h \left\| v \right\|_X > 0$ and $b \geq h \left\| v \right\|_X > 0$.
    From the existence of $\partial_v f(x)$, there exists $\delta > 0$ such that, if $0 < h < \delta$, then 
    \begin{displaymath}
        \left\| \dfrac{f(x + hv) - f(x)}{h} - c \, \right\|_Y < \varepsilon 
        \qquad\text{and}\qquad
        \left\| \dfrac{f(x) - f(x - hv)}{h} - c \, \right\|_Y < \varepsilon.
    \end{displaymath}
    If $0 < h < \delta$, then 
    \begin{align*}
        & \left\| \dfrac{a (f(x + hv) - f(x))  + b (f(x) - f(x - hv))}{h(a + b)} - c \, \right\|_Y \\
        =& \left\| \dfrac{a}{a + b} \left( \dfrac{f(x + hv) - f(x)}{h} - c \right) + \dfrac{b}{a + b} \left( \dfrac{f(x) - f(x - hv)}{h} - c \right) \right\|_Y \\ 
        \leq& \,  \dfrac{a}{a + b} \left\|  \dfrac{f(x + hv) - f(x)}{h} - c  \, \right\|_Y + \dfrac{b}{a + b} \left\| \dfrac{f(x) - f(x - hv)}{h} - c \, \right\|_Y  \\
        <& \, \varepsilon.
    \end{align*}
    Since $\varepsilon > 0$ is arbitrary, we conclude that $\partial^{\sd}_v f(x)$ exists and $\partial^{\sd}_v f(x) = c = \partial_v f(x)$.    
\end{proof}

In the classical sense, the directional derivative of a function $f : X \to Y$ can be rewritten as 
\begin{equation}    \label{eq: representation of classical directional derivatives}
    \left. \dfrac{d}{dt} f(x + tv) \right|_{t = 0} = \partial_v f(x)
\end{equation}
for $x, v \in X$.
However, such a representation may not hold for specular directional derivatives because of the nonlinearity.
Fortunately, we provide a special case, a weak version of the chain rule, as illustrated in the following example.

\begin{example}   \label{ex: calculating specular derivatives of a composite function}  
    Let $X$ and $Y$ be normed vector spaces over $\mathbb{R}$ equipped with norms $\left\| \, \cdot \, \right\|_X$ and $\left\| \, \cdot \, \right\|_Y$, respectively.
    Let $f : \Omega \to Y$ be a function, where $\Omega$ is an open subset of $X$.
    Let $x$ be in $\Omega$, and let $v \in X$ be fixed. 
    First, assume $v \neq 0$.
    Introduce a function $F : \mathbb{R} \to Y$ defined by 
    \begin{displaymath}
        F(t) := \dfrac{f(x + tv)}{\left\| v \right\|_X}
    \end{displaymath}
    for $t \in \mathbb{R}$.
    Suppose that $\partial^{\sd}_v f(x)$ and $F^{\sd}(0)$ exist.
    Writing 
    \begin{displaymath}
        \mu := \left\| F(t) - F(0) \right\|_Y 
        \qquad\text{and}\qquad
        \nu := \left\| F(0) - F(-t) \right\|_Y,
    \end{displaymath}
    we find that 
    \begin{displaymath}
      \dfrac{\partial^{\sd}_v f(x)}{\left\| v \right\|_X}
      = \lim_{t \searrow 0} \dfrac{(F(t) - F(0)) \sqrt{t^2 + \nu^2} + (F(0) - F(-t)) \sqrt{t^2 + \mu^2}}{t \sqrt{t^2 + \nu^2} + t \sqrt{t^2 + \mu^2}}
      = F^{\sd}(0),
    \end{displaymath}
    that is,
    \begin{equation}    \label{eq: representation of specular directional derivatives - 1}
        \left. \dfrac{d^{\sd}}{d t} \dfrac{f(x + tv)}{\left\| v \right\|_X} \right|_{t = 0} = \dfrac{\partial^{\sd}_v f(x)}{\left\| v \right\|_X}.
    \end{equation}
    The above equality holds for the case $v = 0$ (with the convention $0 \cdot \infty = 0$).

    Similarly, letting $G(t) := f(x + tv)$ for $t \geq 0$, one can find that 
    \begin{equation}    \label{eq: representation of specular directional derivatives - 2}
        \left. \dfrac{d^{\sd}}{d t} f(x + tv) \right|_{t = 0} = \dfrac{\partial_v^{\sd}\left( \left\| v \right\|_X f(x)  \right)}{\left\| v \right\|_X}
    \end{equation}
    if $\partial_v^{\sd}\left( \left\| v \right\|_X f(x)  \right)$ and $G^{\sd}(0)$ exist.    

    Observe that if $f$ is smooth enough, then the equalities in \eqref{eq: representation of specular directional derivatives - 1} and \eqref{eq: representation of specular directional derivatives - 2} can be simplified to the equality \eqref{eq: representation of classical directional derivatives}, as the $\left\| v \right\|_X$ term cancels out. 
\end{example}

\subsection{Real-valued functions}

Specular directional derivatives of real-valued functions can be represented in terms of the tangent and arctangent functions as follows.

\begin{lemma}   \label{lem:rep_of_sdd}
    Let $X$ be a normed vector space over $\mathbb{R}$ equipped with a norm $\left\| \, \cdot \, \right\|$.
    Let $f : \Omega \to \mathbb{R}$ be a function, where $\Omega$ is an open subset of $X$.
    For each $x \in \Omega$, it holds that 
    \begin{align}
        \partial_{v}^{\sd}f(x) 
        &= \lim_{h \searrow 0} \left\| v \right\|  \mathcal{A} \left( \dfrac{f(x+hv) - f(x)}{h \left\| v \right\|}, \dfrac{f(x) - f(x-hv)}{h \left\| v \right\|} \right)  \nonumber   \\
        &= \lim_{h \searrow 0} \left\| v \right\|  \mathcal{B} \left( f(x+hv) - f(x), f(x) - f(x-hv), h \left\| v \right\| \right)    \nonumber  \\
        &= \lim_{h \searrow 0} \left\| v \right\|  \mathcal{C} \left( \dfrac{f(x+hv) - f(x)}{h \left\| v \right\|}, \dfrac{f(x) - f(x-hv)}{h \left\| v \right\|} \right).    \label{lem:rep_of_sdd-3}  
    \end{align}
\end{lemma}

\begin{proof}
    The case $v = 0$ is trivial. 
    Assume that $v \neq 0$.
    Let $h > 0$ be sufficiently small.
    Write 
    \begin{displaymath}
        a := f(x + hv) - f(x), \qquad
        b := f(x) - f(x - hv), \qquad
        c := h \left\| v \right\|,
    \end{displaymath}
    and hence 
    \begin{displaymath}
        \left\| U \right\|_{X \times \mathbb{R}} = \sqrt{c^2 + b^2} 
        \qquad\text{and}\qquad
        \left\| V \right\|_{X \times \mathbb{R}} = \sqrt{c^2 + a^2},
    \end{displaymath}
    where $U$ and $V$ are defined as in \eqref{def:pt_UV}.
    Then, the fraction in \eqref{eq:spG} can be reduced to   
    \begin{displaymath}
        \left\| v \right\| \cdot \dfrac{a \left\| U \right\|_{X \times \mathbb{R}} + b \left\| V \right\|_{X \times \mathbb{R}}}{c \left\| U \right\|_{X \times \mathbb{R}} + c \left\| V \right\|_{X \times \mathbb{R}}} = \left\| v \right\| \mathcal{B}(a, b, c),
    \end{displaymath}
    where the function $\mathcal{B}$ is defined as in \eqref{def:B}.
    Hence, applying the identity \eqref{eq:ABC} and taking the limit as $h \searrow 0$ complete the proof of the claim.
\end{proof}

\begin{remark}
    The one-sided limit $(h \searrow 0)$ in \eqref{lem:rep_of_sdd-3} exists if and only if the limit $(h \to 0)$ exists.
\end{remark}

\begin{remark}
    The specular directional derivative may not be invariant under the choice of the product norm in \eqref{def:prod_norm}.
    For example, consider the function $f : \mathbb{R}^2 \to \mathbb{R}$ defined by $f(x_1, x_2) = x_1 + |x_2|$ for $(x_1, x_2) \in \mathbb{R}^2$.
    Fix the direction $v = (1, 1)$.
    On the one hand, one can calculate that 
    \begin{equation} \label{rmk:not_invariant}
        \partial^{\sd}_v f(0, 0) = \sqrt{3} - 1
    \end{equation}
    by applying \cref{lem:rep_of_sdd}.
    On the other hand, consider the alternative product norm 
    \begin{equation*}
        \left\| ((x_1, x_2), y) \right\|_{2 \times 1} := \sqrt{x_1^2 + x_2^2} + |y|  
    \end{equation*}
    for $((x_1, x_2), y) \in \mathbb{R}^2 \times \mathbb{R}$, instead of \eqref{def:prod_norm}.
    Replacing the product norm in the limit \eqref{eq:spG} with this norm, we have
    \begin{displaymath}
       \lim_{h \searrow 0} \left[ \left( \dfrac{f(h, h)}{h} \right) \dfrac{\left\| U\right\|_{2 \times 1}}{\left\| U \right\|_{2 \times 1} + \left\| V \right\|_{2 \times 1}} - \left( \dfrac{f(-h, -h)}{h} \right) \dfrac{\left\| V \right\|_{2 \times 1}}{\left\| U \right\|_{2 \times 1} + \left\| V \right\|_{2 \times 1}} \right]
       = 2 - \sqrt{2},
    \end{displaymath}
    where $U = ((-h, -h), f(-h, -h))$ and $V = ((h, h), f(h, h))$, which differs from the value in \eqref{rmk:not_invariant}.
\end{remark}

For a function $f : \Omega \to Y$ and vectors $x \in \Omega$, $v \in X$, where $\Omega$ is an open subset of $X$, we consider the following hypotheses:
\begin{enumerate}[label=\rm{(H\arabic*)}, ref=\rm{(H\arabic*)}, itemsep=1ex]
    \item $\partial^+_v f(x)$ exists as an extended real number. \label{H1}
    \item $\partial^-_v f(x)$ exists as an extended real number. \label{H2}
    \item $\partial_v^{+} f(x)$ and $\partial^-_v f(x)$ are not simultaneously $\infty$ or $-\infty$. \label{H3}
\end{enumerate}

Recall that specular differentiability does not guarantee the existence of both one-sided directional derivatives; see \cite[Ex. 2.2, Ex. 2.4]{2026a_Jung}.
Therefore, specular directional differentiability does not imply either \ref{H1} or \ref{H2} in general.
However, if a specular directional derivative exists and one of the one-sided directional derivatives exists, then both one-sided directional derivatives exist and the specular directional derivative admits a representation in terms of them.
This result generalizes \cite[Lem. 2.3]{2026a_Jung}.

\begin{theorem} \label{thm:repr_sdd}
    Let $X$ be a normed vector space over $\mathbb{R}$ equipped with a norm $\left\| \, \cdot \, \right\|$.
    Let $f: \Omega \to \mathbb{R}$ be a function, where $\Omega$ is an open subset of $X$.
    Fix points $x \in \Omega$ and $v \in X$.
    If the specular directional derivative $\partial^{\sd}_v f(x)$ exists as a real number, then the following statements hold:
    \begin{enumerate}[label=\upshape(\alph*)]
        \item \label{thm:repr_sdd-1} If \ref{H1} holds, then \ref{H2} holds with 
        \begin{equation} \label{thm:repr_ldd}
            \partial^-_v f(x) = \tan \left( 2 \arctan \left( \partial^{\sd}_v f(x) \right) - \arctan \left( \partial^+_v f(x) \right) \right). 
        \end{equation}        
        \item \label{thm:repr_sdd-2} If \ref{H2} holds, then \ref{H1} holds with 
        \begin{equation*}
            \partial^+_v f(x) = \tan \left( 2 \arctan \left( \partial^{\sd}_v f(x) \right) - \arctan \left( \partial^-_v f(x) \right) \right).  
        \end{equation*}   
        \item \label{thm:repr_sdd-3} If \ref{H1} or \ref{H2} holds, then \ref{H3} holds.   
    \end{enumerate}
    Conversely, if \ref{H1}, \ref{H2}, and \ref{H3} hold, then $\partial^{\sd}_v f(x)$ exists as a real number with the formula
    \begin{equation}    \label{eq:repr_sdd}
        \partial_{v}^{\sd}f(x) 
        = \left\| v \right\| \, \mathcal{C} \left( \dfrac{\partial^+_v f(x)}{\left\| v \right\|}, \dfrac{\partial^-_v f(x)}{\left\| v \right\|}  \right),
    \end{equation}
    with the interpretation $\arctan(\pm \infty) = \pm \frac{\pi}{2}$.
\end{theorem}

\begin{proof}
    If $v = 0$, then $\partial^+_v f(x) = \partial^-_v f(x) = \partial^{\sd}_v f(x) = 0$, so that the result holds trivially.

    Suppose that $v \neq 0$.
    For convenience, write
    \begin{equation}    \label{def:alpha,beta}
        \alpha := \dfrac{\partial^+_v f(x)}{\left\| v \right\|},
        \qquad
        \beta := \dfrac{\partial^-_v f(x)}{\left\| v \right\|},
        \qquad\text{and}\qquad 
        \gamma := \frac{\partial^{\sd}_v f(x)}{\left\| v \right\|}.
    \end{equation}

    First, suppose that $\partial^{\sd}_v f(x) =: \gamma$ exists as a real number.
    Let $\varepsilon > 0$ be arbitrary. 
    By the formula \eqref{lem:rep_of_sdd-3}, there exists $\delta_1 (\varepsilon) > 0$ such that, if $h \in (0, \delta_1)$, then 
    \begin{equation} \label{ineq:repr_sdd-0}
        \left\vert \, \arctan \left( \frac{f(x + hv) - f(x)}{h \left\| v \right\|} \right) + \arctan \left( \frac{f(x) - f(x - hv)}{h \left\| v \right\|} \right) - 2 \arctan \gamma \right\vert < \frac{\varepsilon}{2}.
    \end{equation}

    To prove \cref{thm:repr_sdd-1}, assume that \ref{H1} holds.
    Then, there exists $\delta_2 (\varepsilon) > 0$ such that, if $h \in (0, \delta_2)$, then 
    \begin{equation}    \label{ineq:repr_sdd-01}
        \left\vert \, \arctan \left( \frac{f(x + hv) - f(x)}{h \left\| v \right\|} \right) - \arctan \alpha \right\vert < \frac{\varepsilon}{2}.
    \end{equation}
    Choose $\delta_3 := \min \left\{ \delta_1, \delta_2 \right\}$.
    If $h \in (0, \delta_3)$, then one can find that 
    \begin{equation*}
        \left\vert \, \arctan \left( \frac{f(x) - f(x - hv)}{h \left\| v \right\|} \right) - ( 2 \arctan \gamma - \arctan \alpha) \right\vert < \varepsilon,
    \end{equation*}
    by combining the inequalities \cref{ineq:repr_sdd-0,ineq:repr_sdd-01}.
    Since $\varepsilon > 0$ is arbitrary, we have
    \begin{equation*}
        \lim_{h \searrow 0} \arctan \left( \frac{f(x) - f(x - hv)}{h \left\| v \right\|} \right) = 2 \arctan \gamma - \arctan \alpha,
    \end{equation*}
    which implies the desired equality \eqref{thm:repr_ldd} by the continuity and injectivity of the arctangent function.
    \Cref{thm:repr_sdd-2} can be proved by a similar argument, and hence we omit the details.

    Next, to show \cref{thm:repr_sdd-3}, assume that either \ref{H1} or \ref{H2} holds.
    By \cref{thm:repr_sdd-1,thm:repr_sdd-2}, both \ref{H1} and \ref{H2} hold. 
    Suppose to the contrary that $\partial^+_v f(x) = \infty = \partial^-_v f(x)$.
    By the formula \eqref{lem:rep_of_sdd-3}, there exists $\delta_1 >0$ such that, if $h \in (0, \delta_1)$, then 
    \begin{displaymath}
        \left\vert \left\| v \right\| 
        \mathcal{C} \left( \dfrac{f(x+hv) - f(x)}{h\left\| v \right\|}, \dfrac{f(x) - f(x-hv)}{h\left\| v \right\|} \right) - \left\| v \right\| \, \gamma \, \right\vert  < \left\| v \right\| ,
    \end{displaymath}    
    where the function $\mathcal{C}$ is defined as in \eqref{def:C}.
    This implies that  
    \begin{equation} \label{ineq:repr_sdd-7}
        \arctan\left( \dfrac{f(x+hv) - f(x)}{h\left\| v \right\|}  \right) + \arctan\left( \dfrac{f(x) - f(x-hv)}{h\left\| v \right\|}  \right) < 2\arctan\left( 1 + \gamma \right)
    \end{equation}
    by the monotonicity of the tangent function on $\left( -\frac{\pi}{2}, \frac{\pi}{2} \right)$.
    Since $\partial^+_v f(x) = \infty = \partial^-_v f(x)$, there exists $\delta_2 > 0$ such that, if $h \in (0, \delta_2)$, then 
    \begin{displaymath}
        \dfrac{f(x + hv) - f(x)}{h} > 2 + |\gamma|
        \qquad\text{and}\qquad
        \dfrac{f(x) - f(x - hv)}{h} > 2 + |\gamma|.
    \end{displaymath}      
    Combining these two inequalities, one can find that 
    \begin{align*}
        \arctan\left( \dfrac{f(x+hv) - f(x)}{h\left\| v \right\|}  \right) + \arctan\left( \dfrac{f(x) - f(x-hv)}{h\left\| v \right\|}  \right)
        &> 2\arctan (2 + |\gamma|)   \\
        &> 2\arctan (1 + \gamma),
    \end{align*}
    which contradicts the inequality \eqref{ineq:repr_sdd-7} when $0 < h < \min\{\delta_1, \delta_2\}$.
    Therefore, the case $\partial^+_v f(x) = \infty = \partial^-_v f(x)$ is impossible.

    The other case $\partial^+_v f(x) = -\infty = \partial^-_v f(x)$ can be proved similarly.

    Conversely, assume \labelcref{H1,H2,H3}.
    We want to prove that $\partial^{\sd}_v f(x)$ exists as a real number with the formula \eqref{eq:repr_sdd}.
    There are seven cases:
    \begin{enumerate}[label=(C\arabic*), ref=(C\arabic*), itemsep=1ex]
        \item $\partial^+_v f(x)$ and $\partial^-_v f(x)$ exist as finite real numbers. \label{(C1)}
        \item $-\infty < \partial^+_v f(x) < \infty$ and $\partial^-_v f(x) = \infty$. \label{(C2)}
        \item $-\infty < \partial^+_v f(x) < \infty$ and $\partial^-_v f(x) = -\infty$. \label{(C3)}
        \item $\partial^+_v f(x) = \infty$ and $-\infty < \partial^-_v f(x) < \infty$. \label{(C4)}
        \item $\partial^+_v f(x) = -\infty$ and $-\infty < \partial^-_v f(x) < \infty$. \label{(C5)}
        \item $\partial^+_v f(x) = \infty$ and $\partial^-_v f(x) = -\infty$. \label{(C6)}
        \item $\partial^+_v f(x) = -\infty$ and $\partial^-_v f(x) = \infty$. \label{(C7)}
    \end{enumerate}
    
    First, assume  \labelcref{(C1)}.
    Since the arctangent function is continuous on $\mathbb{R}$, it follows that
    \begin{align*}
        \arctan \alpha
        &= \arctan \left( \dfrac{1}{\left\| v \right\|} \cdot \lim_{h \searrow 0} \dfrac{f(x + hv) - f(x)}{h} \right)     \\
        &= \lim_{h \searrow 0} \arctan \left( \dfrac{f(x + hv) - f(x)}{h\left\| v \right\|} \right) 
    \end{align*}
    and 
    \begin{align*}
        \arctan \beta
        &= \arctan \left( \dfrac{1}{\left\| v \right\|} \cdot \lim_{h \searrow 0} \dfrac{f(x) - f(x - hv)}{h} \right)     \\
        &= \lim_{h \searrow 0} \arctan \left( \dfrac{f(x) - f(x - hv)}{h\left\| v \right\|} \right) . 
    \end{align*}
    Since 
    \begin{displaymath}
        \dfrac{1}{2} \arctan \left( \dfrac{f(x + hv) - f(x)}{h\left\| v \right\|} \right) + \dfrac{1}{2} \arctan \left( \dfrac{f(x) - f(x - hv)}{h\left\| v \right\|} \right) =: t_1 \in \left( -\dfrac{\pi}{2}, \dfrac{\pi}{2} \right),
    \end{displaymath}
    the tangent function is continuous at $t_1$.
    The combination of these results with the formula \eqref{lem:rep_of_sdd-3} implies that
    \begin{displaymath}
        \left\| v \right\| \mathcal{C}(\alpha, \beta) 
        = \left\| v \right\| \tan\left(  \lim_{h \searrow 0} t_1 \right)
        = \lim_{h \searrow 0} \left( \left\| v \right\| \tan\left( t_1 \right) \right) 
        = \partial_{v}^{\sd}f(x) ,
    \end{displaymath}
    proving the first case.
    
    Second, assume  \labelcref{(C2)}.
    Let $\varepsilon > 0$ be arbitrary.
    Since the arctangent function is continuous at $\alpha$, there exists $\delta_1 > 0$ such that, if $t \in (\alpha - \delta_1, \alpha + \delta_1)$, then 
    \begin{displaymath}
        \left\vert \arctan t - \arctan \left( \alpha \right) \right\vert < \dfrac{\varepsilon}{2}.
    \end{displaymath}
    From the existence of $\partial^+_v f(x) = \left\| v \right\|\alpha$, there exists $\delta_2 > 0$ such that, if $h \in (0, \delta_2)$, then 
    \begin{displaymath}
        \left\vert \dfrac{f(x + hv) - f(x)}{h} - \left\| v \right\|\alpha \, \right\vert < \delta_1 \left\| v \right\|.
    \end{displaymath}
    Combining these, if $h \in (0, \delta_2)$, then 
    \begin{equation} \label{ineq:repr_sdd-1}
        \left\vert \arctan\left( \dfrac{f(x + hv) - f(x)}{h\left\| v \right\|} \right) - \arctan \left( \alpha\right) \right\vert < \dfrac{\varepsilon }{2}.
    \end{equation}
    Since $\arctan t \to \dfrac{\pi}{2}$ as $t \to \infty$, there exists $M_1 > 0$ such that, if $t > M_1$, then 
    \begin{displaymath}
        \left\vert \arctan t - \dfrac{\pi}{2} \right\vert < \dfrac{\varepsilon}{2}.
    \end{displaymath}
    From the assumption that $\partial^-_v f(x) = \infty$, there exists $\delta_3 > 0$ such that, if $h \in (0, \delta_3)$, then
    \begin{displaymath}
        \dfrac{f(x) - f(x-hv)}{h \left\| v \right\|} > M_1.
    \end{displaymath}
    Combining these, if $h \in (0, \delta_3)$, then 
    \begin{equation} \label{ineq:repr_sdd-2}
        \left\vert \arctan\left( \dfrac{f(x) - f(x-hv)}{h \left\| v \right\|} \right) - \dfrac{\pi}{2} \right\vert < \dfrac{\varepsilon}{2}.
    \end{equation}
    Choose $\delta_4 := \min\left\{ \delta_2, \delta_3 \right\} > 0$.
    Then, by the inequalities \eqref{ineq:repr_sdd-1} and \eqref{ineq:repr_sdd-2}, if $h \in (0, \delta_4)$, then
    \begin{multline*}
        \left\vert \left( \arctan\left( \dfrac{f(x + hv) - f(x)}{h\left\| v \right\|} \right) + \arctan\left( \dfrac{f(x) - f(x-hv)}{h \left\| v \right\|} \right) \right) \right.\\ 
        \left. -\left( \arctan \left( \alpha\right)  + \dfrac{\pi}{2} \right)\right\vert < \varepsilon .
    \end{multline*}
    Since $\varepsilon > 0$ is arbitrary, it follows that
    \begin{align*}
        &\lim_{h \searrow 0} \left[ \dfrac{1}{2} \arctan\left( \dfrac{f(x + hv) - f(x)}{h\left\| v \right\|} \right) + \dfrac{1}{2} \arctan\left( \dfrac{f(x) - f(x-hv)}{h \left\| v \right\|} \right) \right] \\
        =& \, \dfrac{1}{2}\left( \arctan \left( \alpha\right)  + \dfrac{\pi}{2} \right) \\
        =& \, t_2.
    \end{align*}
    Since $0< t_2 < \dfrac{\pi}{2}$, the tangent function is continuous at $t_2$.
    Using the formula \eqref{lem:rep_of_sdd-3}, one can deduce the conclusion as in the first case. 
    Other similar cases from  \labelcref{(C3)} to  \labelcref{(C5)} can be proved in the same way.

    Third, assume  \labelcref{(C7)}.
    Let $\varepsilon > 0$ be arbitrary.
    Since $\arctan t \to -\dfrac{\pi}{2}$ as $t \to -\infty$, there exists $M_2 < 0$ such that, if $t < M_2$, then 
    \begin{displaymath}
        \left\vert \arctan t + \dfrac{\pi}{2} \right\vert < \dfrac{\varepsilon}{2}.
    \end{displaymath}
    From the assumption $\partial^+_v f(x) = - \infty$, there exists $\delta_5 > 0$ such that, if $h \in (0, \delta_5)$, then 
    \begin{displaymath}
        \dfrac{f(x+hv) - f(x)}{h \left\| v \right\|} < M_2.
    \end{displaymath}
    Combining these, if $h \in (0, \delta_5)$, then 
    \begin{equation} \label{ineq:repr_sdd-3}
        \left\vert \arctan \left( \dfrac{f(x+hv) - f(x)}{h \left\| v \right\|} \right) + \dfrac{\pi}{2} \right\vert < \dfrac{\varepsilon }{2}.
    \end{equation}
    Choose $\delta_6 := \min \left\{ \delta_3, \delta_5 \right\} > 0$, where $\delta_3 >0 $ can be chosen as in \eqref{ineq:repr_sdd-2}.
    Then, by inequalities \eqref{ineq:repr_sdd-2} and \eqref{ineq:repr_sdd-3}, if $h \in (0, \delta_6)$, then
    \begin{displaymath}
        \left\vert \left( \arctan\left( \dfrac{f(x + hv) - f(x)}{h\left\| v \right\|} \right) + \arctan\left( \dfrac{f(x) - f(x-hv)}{h \left\| v \right\|} \right) \right) -\left( -\dfrac{\pi}{2} + \dfrac{\pi}{2} \right)\right\vert < \varepsilon .
    \end{displaymath}
    Since $\varepsilon > 0$ is arbitrary, it follows that
    \begin{align*}
        & \lim_{h \searrow 0} \left[ \dfrac{1}{2} \arctan\left( \dfrac{f(x + hv) - f(x)}{h\left\| v \right\|} \right) + \dfrac{1}{2} \arctan\left( \dfrac{f(x) - f(x-hv)}{h \left\| v \right\|} \right) \right] \\
        =& \, \dfrac{1}{2}\left( -\dfrac{\pi}{2} + \dfrac{\pi}{2} \right) \\
        =& \, 0.
    \end{align*}
    As before, the formula \eqref{lem:rep_of_sdd-3} implies the conclusion as in the first case.
    The only remaining case \labelcref{(C6)} can be proved similarly.
\end{proof}

We provide three corollaries of \cref{thm:repr_sdd}.
First, the following corollary gives a sufficient condition under which multiplying a function by a scalar preserves specular directional differentiability.

\begin{corollary} \label{cor:multiplied_sdd}
    Let $X$ be a normed vector space over $\mathbb{R}$ equipped with a norm $\left\| \, \cdot \, \right\|$.
    Let $f: \Omega \to \mathbb{R}$ be a function, where $\Omega$ is an open subset of $X$.
    Fix points $x \in \Omega$ and $v \in X$.
    If \ref{H1}, \ref{H2}, and \ref{H3} hold, then $\partial_v^{\sd} (\lambda f) (x)$ exists for all $\lambda \in \mathbb{R}$.
\end{corollary}

\begin{proof}
    Fix $\lambda \in \mathbb{R}$.
    The conclusion immediately follows if $\lambda = 0$. 
    Thus, assume that $\lambda \neq 0$.
    Since \ref{H1}, \ref{H2}, and \ref{H3} hold for $f$, \ref{H1}, \ref{H2}, and \ref{H3} hold for $\lambda f$.
    Therefore, \cref{thm:repr_sdd} ensures the existence of the specular directional derivative $\partial^{\sd}_v (\lambda f) (x)$.
\end{proof}

\begin{example}
    In \cref{cor:multiplied_sdd}, the assumptions \ref{H1} and \ref{H2} cannot both be dropped.
    Define $f:\left( -\frac{1}{2}, \frac{1}{2} \right) \to\mathbb{R}$ by
    \begin{equation*}
        f(t)=
        \begin{cases}
            0, & \displaystyle  t \in \left\{0\right\} \cup \bigcup_{i=1}^{\infty} \left(\frac{1}{2i+1},\frac{1}{2i}\right),\\[1em]
            t, & \displaystyle  t \in \bigcup_{i = 1}^{\infty} \left(-\frac{1}{2i},-\frac{1}{2i+1}\right), \\[1em]
            \left( \tan\frac{\pi}{8} \right) t, & \text{otherwise},
        \end{cases}
    \end{equation*}
    for $t \in \left( -\frac{1}{2}, \frac{1}{2} \right)$.
    For $\lambda \in \mathbb{R}$, define the function $F_{\lambda} : \left(0, \frac{1}{2}\right) \to \mathbb{R}^2$ by 
    \begin{equation*}
        F_{\lambda}(h) := \left( \frac{\lambda f(h) - \lambda f(0)}{h}, \frac{\lambda f(0) - \lambda f(-h)}{h} \right)
    \end{equation*}
    for $h \in \left(0, \frac{1}{2}\right)$.
    On the one hand, consider $\lambda = 1$.
    Then, neither $\partial^+ f(0)$ nor $\partial^- f(0)$ exists since $F_1(h)$ takes each of the two values $(0, 1)$ and $\left( \tan\frac{\pi}{8}, \tan\frac{\pi}{8} \right)$ arbitrarily close to $0$.
    However, the two possible values give the same value under $\mathcal{C}$:
    \begin{equation*}
        \mathcal{C}(0,1)
        = \tan\left(\frac{1}{2}\arctan 1\right)
        = \tan\frac{\pi}{8}
        = \mathcal{C}\left( \tan\frac{\pi}{8}, \tan\frac{\pi}{8} \right),
    \end{equation*}
    and hence $f^{\sd}(0)$ exists.
    On the other hand, consider $\lambda = 2$.
    For the function $2f$, $F_{2}(h)$ takes each of the two values $(0, 2)$ and $\left( 2 \tan\frac{\pi}{8}, 2 \tan\frac{\pi}{8} \right)$ arbitrarily close to $0$.
    Since 
    \begin{equation*}
        \mathcal{C}(0,2)
        = \tan\left(\frac{1}{2}\arctan 2\right)
        \neq 2 \tan\frac{\pi}{8}
        = \mathcal{C}\left( 2\tan\frac{\pi}{8}, 2\tan\frac{\pi}{8} \right),
    \end{equation*}
    the specular derivative $(2f)^{\sd}(0)$ does not exist.
\end{example}

Second, once the bounds of the one-sided directional derivatives are known, the specular directional derivative can be estimated accordingly.

\begin{corollary} \label{cor:estimate_of_sdd}
    Let $X$ be a normed vector space over $\mathbb{R}$ equipped with a norm $\left\| \, \cdot \, \right\|$.
    Let $f: \Omega \to \mathbb{R}$ be a function, where $\Omega$ is an open subset of $X$.
    Fix points $x \in \Omega$ and $v \in X$.
    Suppose that \ref{H1}, \ref{H2}, and \ref{H3} hold.
    Let $m_1$ and $m_2$ be extended real numbers.
    Then, the following statements hold.
    \begin{enumerate}[label=\upshape(\alph*), itemsep=1ex]
        \item \label{cor:estimate_of_sdd-1} It holds that either $\partial^{-}_v f(x) \leq \partial^{\sd}_v f(x) \leq \partial^{+}_v f(x)$ or $\partial^{+}_v f(x) \leq  \partial^{\sd}_v f(x) \leq \partial^{-}_v f(x)$.
        \item \label{cor:estimate_of_sdd-2} If $m_1 \leq \partial^{+}_v f(x)$ and $m_2 \leq \partial^{-}_v f(x)$, then $\min\left\{ m_1, m_2 \right\} \leq \partial_{v}^{\sd}f(x)$.
        \item \label{cor:estimate_of_sdd-3} If $\partial^{+}_v f(x) \leq m_1$ and $\partial^{-}_v f(x) \leq m_2$, then $\partial_{v}^{\sd}f(x) \leq \max\left\{ m_1, m_2 \right\}$.
    \end{enumerate}
\end{corollary}

\begin{proof}
    By \cref{thm:repr_sdd}, $\partial^{\sd}_v f(x)$ exists as a real number with the formula \eqref{eq:repr_sdd}.
    
    We prove \cref{cor:estimate_of_sdd-1} first.
    The case $v=0$ is trivial, so assume $v \neq 0$.
    If $\partial^{\pm}_v f(x)$ are finite, the conclusion follows from the monotonicity of the tangent function on $(-\frac{\pi}{2}, \frac{\pi}{2})$, dividing cases into $\partial^+_v f(x) \leq \partial^-_v f(x)$ and $\partial^+_v f(x) \geq \partial^-_v f(x)$.
    If one of $\partial_v^\pm f(x)$ is infinite and the other is its opposite infinity, then the conclusion is trivial.
    If, for example, $\partial^+_v f(x)=\infty$ and $\partial^-_v f(x) \in \mathbb{R}$, then
    \begin{equation*}
        \arctan\left(\frac{\partial_v^-f(x)}{\|v\|}\right)
        \leq
        \frac{\pi}{4}+\frac12\arctan\left(\frac{\partial_v^-f(x)}{\|v\|}\right)
        < \frac{\pi}{2}
    \end{equation*}
    since $\arctan t < \frac{\pi}{2}$ for all $t \in \mathbb{R}$.
    Thus, the monotonicity of the tangent function on $(-\frac{\pi}{2}, \frac{\pi}{2})$ and \eqref{eq:repr_sdd} imply that 
    \begin{equation*}
        \frac{\partial_v^-f(x)}{\|v\|} \leq \tan\left( \frac{\pi}{4} + \frac{1}{2} \arctan\left(\frac{\partial_v^-f(x)}{\|v\|}\right)
        \right) 
        = \frac{\partial_v^{\sd}f(x)}{\|v\|}.
    \end{equation*}
    Since $\partial_v^+f(x)=\infty$, this implies $\partial^{-}_v f(x) \leq \partial^{\sd}_v f(x) \leq \partial^{+}_v f(x)$.
    The remaining infinite cases are proved similarly.

    To show \cref{cor:estimate_of_sdd-2}, assume that $m_1 \leq \partial^{+}_v f(x)$ and $m_2 \leq \partial^{-}_v f(x)$.
    Note that $m_1 = m_2 = \infty$ is impossible by the condition \labelcref{H3}.
    Write $m := \min\left\{ m_1, m_2 \right\}$.
    Then, it follows that
    \begin{align*}
        2 \arctan\left( \dfrac{m}{\left\| v \right\|} \right)  
        &\leq \arctan\left( \dfrac{m_1}{\left\| v \right\|} \right) + \arctan \left( \dfrac{m_2}{\left\| v \right\|} \right)     \\
        &\leq \arctan \left( \dfrac{\partial^+_vf(x)}{\left\| v \right\|} \right) + \arctan \left( \dfrac{\partial^-_v f(x)}{\left\| v \right\|} \right).
    \end{align*}
    Dividing the above inequality by $2$, one can find that
    \begin{displaymath}
        \dfrac{m}{\left\| v \right\|} \leq \tan\left( \dfrac{1}{2} \arctan \left( \dfrac{\partial^+_vf(x)}{\left\| v \right\|} \right) + \dfrac{1}{2} \arctan \left( \dfrac{\partial^-_v f(x)}{\left\| v \right\|} \right) \right) 
        = \dfrac{\partial^{\sd}_v f(x)}{\left\| v \right\|},
    \end{displaymath}
    where the equality follows from the formula \eqref{eq:repr_sdd}.
    Multiplying both sides by $\left\| v \right\|$ yields the desired inequality of the second part.
    \Cref{cor:estimate_of_sdd-3} can be proved similarly.
\end{proof}

\begin{remark}
    The conclusions in \cref{cor:estimate_of_sdd-2,cor:estimate_of_sdd-3} of \cref{cor:estimate_of_sdd} can be strengthened to strict inequalities if at least one of the inequalities in the assumptions is strict. This is due to the strictly increasing nature of the arctangent function.
\end{remark}

Third, the formula \eqref{eq:repr_sdd} can be expressed in terms of the function $\mathcal{A}$ defined in \eqref{def:A}.

\begin{corollary} \label{cor:repr_sdd_A}
    Let $X$ be a normed vector space over $\mathbb{R}$ equipped with a norm $\left\| \, \cdot \, \right\|$.
    Let $f: \Omega \to \mathbb{R}$ be a function, where $\Omega$ is an open subset of $X$.
    Fix points $x \in \Omega$ and $v \in X$.
    If \ref{H1}, \ref{H2}, and \ref{H3} hold, then $\partial^{\sd}_v f(x)$ exists as a real number with the formulas
    \begin{align*}
        \partial_{v}^{\sd}f(x)
        &= \left\| v \right\| \, \mathcal{A} \left( \dfrac{\partial^+_v f(x)}{\left\| v \right\|}, \dfrac{\partial^-_v f(x)}{\left\| v \right\|} \right)   \\
        &= 
        \begin{cases}
            \displaystyle  \left\| v \right\| \, \mathcal{A} \left( \dfrac{\partial^+_v f(x)}{\left\| v \right\|}, \dfrac{\partial^-_v f(x)}{\left\| v \right\|} \right)     &  \mbox{if $\partial^{\pm}_v f(x) \in \mathbb{R}$, $\partial^+_v f(x) \neq -\partial^-_v f(x)$,}   \\[1em]
            \partial^+_v f(x) \pm \sqrt{\left\| v \right\|^2 + \left( \partial^+_v f(x) \right)^2} &    \mbox{if $\partial^+_v f(x)\in \mathbb{R}$, $\partial^-_v f(x) = \pm \infty$} ,   \\[0.5em]
            \partial^-_v f(x) \pm \sqrt{\left\| v \right\|^2 + \left( \partial^-_v f(x) \right)^2} &    \mbox{if $\partial^-_v f(x) \in \mathbb{R}$, $\partial^+_v f(x) = \pm \infty$} ,    \\[0.5em]
            0 &    \mbox{if $\partial^+_v f(x) = -\partial^-_v f(x) \in \overline{\mathbb{R}}$}, \nonumber
        \end{cases}
    \end{align*}
    with the estimate 
    \begin{equation} \label{ineq:abs_est_of_sdd}
        |\partial_{v}^{\sd}f(x)| \leq \dfrac{|\partial^+_v f(x) + \partial^-_v f(x)|}{2}.
    \end{equation}
    In particular, if $\partial^{\pm}_v f(x) \in \mathbb{R}$, then 
    \begin{equation}    \label{ineq:est_of_sdd-1}
        \dfrac{\partial^+_v f(x) + \partial^-_v f(x) - |\partial^+_v f(x) + \partial^-_v f(x)|}{4} \leq \partial_{v}^{\sd}f(x)
    \end{equation}
    and
    \begin{equation}    \label{ineq:est_of_sdd-2}
        \partial_{v}^{\sd}f(x) \leq \dfrac{\partial^+_v f(x) + \partial^-_v f(x) + |\partial^+_v f(x) + \partial^-_v f(x)|}{4}.
    \end{equation}
\end{corollary}

\begin{proof}
    Observe that 
    \begin{equation} \label{eq: trigonometric identities - 1}
        \partial_{v}^{\sd}f(x) 
        = \left\| v \right\| \, \mathcal{C} \left( \dfrac{\partial^+_v f(x)}{\left\| v \right\|}, \dfrac{\partial^-_v f(x)}{\left\| v \right\|} \right)
        = \left\| v \right\| \, \mathcal{A} \left( \dfrac{\partial^+_v f(x)}{\left\| v \right\|}, \dfrac{\partial^-_v f(x)}{\left\| v \right\|} \right)
    \end{equation}
    by applying \cref{thm:repr_sdd} and the equality \cref{eq:AB}.
    
    Finally, the estimates \cref{ineq:abs_est_of_sdd,ineq:est_of_sdd-1,ineq:est_of_sdd-2} can be shown by applying \cite[Lem. A.3 (g) and (h)]{2026a_Jung}.
\end{proof}

\begin{example}
    Let $f, g: \Omega \to \mathbb{R}$ be functionals, where $\Omega$ is an open subset of a normed vector space $X$ over $\mathbb{R}$.
    Suppose that \ref{H1}, \ref{H2}, and \ref{H3} hold for the mapping $x \mapsto f(x) + g(x)$ for $x \in \Omega$.
    Given $x \in \Omega$ and $v \in X$, if $\partial_v^{\sd}f(x)$ and $\partial_v g(x)$ exist, then
    \begin{displaymath}
        \left\vert \partial_v^{\sd}(f(x) + g(x)) \right\vert \leq \dfrac{\left\vert \partial^+_v f(x) + \partial^-_v f(x) \right\vert }{2} + |\partial_v g(x)|
    \end{displaymath}    
    by \cref{cor:repr_sdd_A} and \cite[Lem. A.3 (g)]{2026a_Jung}.
\end{example}

\section{Specular G\^ateaux derivatives}    \label{sec:sGd}

In this section, we impose stronger differentiability assumptions in the specular sense.
We prove the uniqueness of a specular G\^ateaux derivative and explore the relationships between specular and classical G\^ateaux differentiability, as summarized in \cref{fig:differentiability}.
Also, we state and prove the Quasi-Mean Value Theorem and the Quasi-Fermat Theorem for real-valued specularly G\^ateaux differentiable functions. 

\subsection{Vector-valued functions}

First, a specular G\^ateaux derivative is unique if it exists.

\begin{proposition} \label{prop:uniqueness_sGd}
    Let $X$ and $Y$ be normed vector spaces over $\mathbb{R}$ equipped with norms $\left\| \, \cdot \, \right\|_X$ and $\left\| \, \cdot \, \right\|_Y$, respectively. 
    Let $f : \Omega \to Y$ be a function, where $\Omega$ is an open subset of $X$.
    If the specular G\^ateaux derivative exists at $x \in \Omega$, then it is unique.
\end{proposition}

\begin{proof}
    Suppose that $\ell_1, \ell_2 \in \mathcal{L}(X;Y)$ are two specular G\^ateaux derivatives of $f$ at $x$.
    Then, by the definition of the specular G\^ateaux derivative, for every $v \in X$, $\ell_1(v) = \partial_v^{\sd} f(x) = \ell_2(v)$.
    This implies that $(\ell_1-\ell_2)(v)=0$ for all $v \in X$.
    Therefore $\ell_1=\ell_2$ in $\mathcal{L}(X;Y)$, proving uniqueness.
\end{proof}

Next, specular G\^ateaux differentiability generalizes G\^ateaux differentiability.

\begin{proposition}   \label{prop:G_implies_spG}
    Let $X$ and $Y$ be normed vector spaces over $\mathbb{R}$ equipped with norms $\left\| \, \cdot \, \right\|_X$ and $\left\| \, \cdot \, \right\|_Y$, respectively.
    Let $f : \Omega \to Y$ be a function, where $\Omega$ is an open subset of $X$.
    If $f$ is G\^ateaux differentiable at $x \in \Omega$, then $f$ is specularly G\^ateaux differentiable at $x$ and $\sGd f(x) = \Gd f(x)$.
\end{proposition}

\begin{proof}
    Since $f$ is G\^ateaux differentiable at $x$, there exists a linear operator $\ell \in \mathcal{L}(X; Y)$ such that $\ell = \Gd f(x)$ and $\ell(v) = \partial_v f(x)$ for all $v \in X$.
    By \cref{prop:dd_implies_sdd}, we have $\partial_v f(x) = \partial_v^{\sd} f(x) = \ell(v)$ for all $v \in X$.
    Therefore, we obtain that $\ell = \sGd f(x)$, as required.
\end{proof}

\begin{example}
    In our definitions, (specular) G\^ateaux differentiability does not imply continuity.
    For example, consider the function $f : \mathbb{R}^2 \to \mathbb{R}$ defined by 
    \begin{equation*}
        f(x, y) =
        \begin{cases}
            \displaystyle \frac{x^4 y}{x^8 + y^2}    &    \mbox{if } (x, y) \neq (0, 0),    \\[0.5em]
            0    &    \mbox{if } (x, y) = (0, 0),
        \end{cases}
    \end{equation*}
    for $(x, y) \in \mathbb{R}^2$.
    For every $v = (v_1, v_2) \in\mathbb{R}^2$, we have
    \begin{equation*}
        \partial_v f(0, 0) = \lim_{h \to 0} \frac{f(hv_1, hv_2) - f(0, 0)}{h}=0.
    \end{equation*}
    Hence $f$ is G\^ateaux differentiable at the origin with zero derivative.
    Since $f$ is smooth on $\mathbb{R}^2 \setminus \{(0, 0)\}$, it is classically G\^ateaux differentiable in $\mathbb{R}^2$.
    However, $f$ is not continuous at the origin because $f(x, x^4)=\frac{1}{2}$ for all $x \neq 0$.
    Therefore, by \cref{prop:G_implies_spG}, specular G\^ateaux differentiability does not imply continuity.
\end{example}

\subsection{Real-valued functions}

To generalize the Mean Value Theorem, we need the following one-dimensional form of the Quasi-Mean Value Theorem with specular derivatives.

\begin{remark}  \label{rmk:QMVT}
    The Quasi-Mean Value Theorem in $\mathbb{R}$ can be reformulated as follows.
    Let $f : [a, b] \to \mathbb{R}$ be continuous on the interval $[a, b]$, and suppose that $f^{\sd}(x)$ exists for all $x \in (a, b)$.
    Then, there exist $t_1, t_2 \in (0, 1)$ such that 
    \begin{displaymath}
        f^{\sd}(a + t_1(b - a)) (b - a) \leq f(b) - f(a) \leq f^{\sd}(a + t_2(b - a)) (b - a).
    \end{displaymath}    
\end{remark}

Now, we are ready to prove the Quasi-Mean Value Theorem in the specular sense for real-valued functions.

\begin{theorem}
    [Quasi-Mean Value Theorem] \label{thm:QMVT_nvs_real_valued}
    Let $X$ be a normed vector space over $\mathbb{R}$ equipped with a norm $\left\| \, \cdot \, \right\|$. 
    Suppose that $f : \Omega \to \mathbb{R}$ is continuous and specularly G\^ateaux differentiable in $\Omega$, where $\Omega$ is an open convex subset of $X$.
    Suppose that \ref{H1}, \ref{H2}, and \ref{H3} hold for all $v \in X$ and $x \in \Omega$.
    Then, for all $u, v \in \Omega$ with $u \neq v$, it holds that 
    \begin{displaymath} 
        \inf_{t \in (0, 1)} \dfrac{\partial_w^{\sd} \left( \left\| w \right\| f \right) (u + tw)}{\left\| w \right\|}
        \leq f(v) - f(u)
        \leq \sup_{t \in (0, 1)} \dfrac{\partial_w^{\sd} \left( \left\| w \right\| f \right) (u + tw)}{\left\| w \right\|},
    \end{displaymath}   
    where $w := v - u \in X \setminus \left\{ 0 \right\}$.
\end{theorem}

\begin{proof}
    Let $u,v\in\Omega$ be such that $u\neq v$, and write $w := v-u \in X \setminus \left\{ 0 \right\}$.
    Since $\Omega$ is convex, $u+tw\in\Omega$ for all $t\in[0, 1]$.
    Define $F:[0, 1]\to\mathbb{R}$ by
    \begin{equation*}
        F(t) := f(u + tw)
    \end{equation*}
    for $t \in [0, 1]$.
    Since $f$ is continuous, $F$ is continuous on $[0, 1]$.

    We first show that, for every $t \in (0, 1)$,
    \begin{equation}    \label{eq:QMVT_nvs_real_valued-1}
        \frac{d^{\sd}}{dt} F(t)
        = \frac{\partial_w^{\sd}\left(\|w\| f\right)(u + tw)}{\|w\|}.
    \end{equation}
    Fix $t \in (0, 1)$ and define $x_t := u + tw$.
    For sufficiently small $h > 0$, define $\mu_h := f(x_t + hw) - f(x_t)$ and $\nu_h := f(x_t) - f(x_t - hw)$.
    Since \ref{H1}, \ref{H2}, and \ref{H3} hold for $x_t$ and $w$, $\partial_w^{\sd}(\|w\| f)(x_t)$ exists by \cref{cor:multiplied_sdd}.
    Thus, the definition of the specular derivative implies that 
    \begin{align*}
        \frac{\partial_w^{\sd}\left(\|w\| f\right)(x_t)}{\|w\|}
        &= \lim_{h\searrow0} \mathcal{B}\left(\|w\| \mu_h, \|w\| \nu_h, h \|w\| \right)   \\
        &= \lim_{h\searrow0}\mathcal{B}(\mu_h, \nu_h, h)  ,
    \end{align*}
    where the second equality follows from the fact that $\mathcal{B}(\lambda a,\lambda b,\lambda c) = \mathcal{B}(a, b, c)$ for every $\lambda > 0$.
    By the definition of $F$, the last limit is precisely the specular derivative $F^{\sd}(t)$.
    Therefore, the desired identity \eqref{eq:QMVT_nvs_real_valued-1} follows.

    By \cref{rmk:QMVT}, there exist
    $t_1, t_2 \in (0, 1)$ such that
    \begin{displaymath} 
        F^{\sd}(t_1) \leq F(1) - F(0) \leq F^{\sd}(t_2).
    \end{displaymath} 
    Combining these inequalities with the equality \eqref{eq:QMVT_nvs_real_valued-1} implies 
    \begin{equation*}
        \frac{\partial_w^{\sd}\left(\|w\|_X f\right)(u + t_1w)}{\|w\|_X}
        \leq f(v) - f(u)
        \leq \frac{\partial_w^{\sd}\left(\|w\|_X f\right)(u + t_2w)}{\|w\|_X}.
    \end{equation*}
    Taking the infimum and supremum over $t \in (0, 1)$ gives the desired inequality.
\end{proof}

We generalize Fermat's Theorem in the specular sense.

\begin{theorem}
    [Quasi-Fermat Theorem]    \label{thm:quasi-Fermat}
    Let $X$ be a normed vector space over $\mathbb{R}$ equipped with a norm $\left\| \, \cdot \, \right\|$.
    Let $f : \Omega \to \mathbb{R}$ be specularly G\^ateaux differentiable in $\Omega$, where $\Omega$ is an open subset of $X$.
    If $x^{\ast} \in \Omega$ is a local minimizer or maximizer of $f$, then, for each $v \in X$, it holds that
    \begin{displaymath}
        \left\vert \partial_v^{\sd}f(x^{\ast}) \right\vert \leq \left\| v \right\|.
    \end{displaymath}
\end{theorem}

\begin{proof}
    Since the case $v = 0$ is obvious, we may assume that $v \in X$ with $v \neq 0$.
    First, assume that $x^{\ast}$ is a local minimizer of $f$.
    Let $h > 0$ be such that $x^{\ast} \pm hv \in \Omega$.
    Since $x^{\ast}$ is a minimizer, we have 
    \begin{displaymath}
      0 \leq \dfrac{1}{2}\arctan \left( \dfrac{f(x^{\ast} + hv) - f(x^{\ast})}{h \left\| v \right\| } \right) \leq \dfrac{\pi}{4}
    \end{displaymath}
    and 
    \begin{displaymath}
      - \dfrac{\pi}{4} \leq \dfrac{1}{2}\arctan \left( \dfrac{f(x^{\ast}) - f(x^{\ast} - hv)}{h \left\| v \right\| } \right) \leq 0.
    \end{displaymath}
    Multiplying the sum of the above inequalities by $\left\| v \right\| $ and sending $h \searrow 0$ completes the proof by the formula \eqref{lem:rep_of_sdd-3}.
    The other case when $x^{\ast}$ is a maximizer can be shown similarly.
\end{proof}

\section{Specular Fr\'echet differentials}   \label{sec:sFd}

This section is devoted to specular Fr\'echet differentiability, which is a stronger notion of differentiability than specular G\^ateaux differentiability.
We prove the uniqueness of specular Fr\'echet differentials and explore their relationships with other notions of differentiability, as summarized in \cref{fig:differentiability}.
Also, we provide a representation of specular Fr\'echet differentiability for real-valued functions. 
Finally, we establish a relationship between the specular Fr\'echet differential of a convex function and the Fr\'echet subdifferential of the function.

\subsection{Vector-valued functions}

A specular Fr\'echet differential is unique if it exists.

\begin{proposition} \label{prop:uniqueness_sFd}
    Let $X$ and $Y$ be normed vector spaces over $\mathbb{R}$ equipped with norms $\left\| \, \cdot \, \right\|_X$ and $\left\| \, \cdot \, \right\|_Y$, respectively. 
    Let $f : \Omega \to Y$ be a function, where $\Omega$ is an open subset of $X$.
    If the specular Fr\'echet differential exists at $x \in \Omega$, then it is unique.
\end{proposition}

\begin{proof}
    Let $\varepsilon > 0$ be arbitrary.
    Suppose that there exist two specular Fr\'echet differentials $\ell_1$ and $\ell_2$ of $f$ at $x \in \Omega$.
    Then, there exists $\delta > 0$ such that, if $0 < \left\| w \right\|_X < \delta$, then 
    \begin{displaymath}
        \dfrac{\left\| (f(x + w) - f(x) - \ell_1(w)) \left\| J \right\|_{X \times Y} + (f(x) - f(x - w) - \ell_1(w)) \left\| K \right\|_{X \times Y} \right\|_Y}{\left\| w \right\|_X \left\| J \right\|_{X \times Y} + \left\| w \right\|_X \left\| K \right\|_{X \times Y}} < \dfrac{\varepsilon}{2} 
    \end{displaymath}
    and 
    \begin{displaymath}
        \dfrac{\left\| (f(x + w) - f(x) - \ell_2(w)) \left\| J \right\|_{X \times Y} + (f(x) - f(x - w) - \ell_2(w)) \left\| K \right\|_{X \times Y} \right\|_Y}{\left\| w \right\|_X \left\| J \right\|_{X \times Y} + \left\| w \right\|_X \left\| K \right\|_{X \times Y}} < \dfrac{\varepsilon}{2} ,
    \end{displaymath}
    where $J$ and $K$ are defined as in \eqref{def: points JK}.
    Thus, if $0 < \left\| w \right\|_X < \delta$, then
    \begin{align*}
        & \left\| (\ell_1 - \ell_2) (w) \right\|_Y \\
        \leq& \left\| \dfrac{(f(x + w) - f(x)) \left\| J \right\|_{X \times Y} + (f(x) - f(x - w)) \left\| K \right\|_{X \times Y}}{\left\| J \right\|_{X \times Y} + \left\| K \right\|_{X \times Y}} - \ell_1 (w)\right\|_Y    \\
        &+ \left\| \dfrac{(f(x + w) - f(x)) \left\| J \right\|_{X \times Y} + (f(x) - f(x - w)) \left\| K \right\|_{X \times Y}}{\left\| J \right\|_{X \times Y} + \left\| K \right\|_{X \times Y}} - \ell_2 (w) \right\|_Y    \\
        <& \, \varepsilon \left\| w \right\|_X.
    \end{align*}
    Therefore, we find that 
    \begin{displaymath}
        \left\| \ell_1 - \ell_2 \right\|_{\mathcal{L}(X; Y)} 
        = \dfrac{1}{\delta} \cdot \sup_{\left\| x \right\|_X = \delta} \left\| (\ell_1 - \ell_2)(x) \right\| 
        \leq \dfrac{1}{\delta} \cdot \sup_{\left\| x \right\|_X = \delta} \left( \varepsilon \left\| x \right\|_X \right)
        \leq \dfrac{1}{\delta} \cdot \varepsilon \cdot \delta 
        = \varepsilon.
    \end{displaymath}
    Since $\varepsilon > 0$ is arbitrary, we conclude that $\ell_1 = \ell_2$.
\end{proof}

Specular Fr\'echet differentiability generalizes Fr\'echet differentiability.

\begin{proposition}  \label{prop:sF_implies_F}
    Let $X$ and $Y$ be normed vector spaces over $\mathbb{R}$ equipped with norms $\left\| \, \cdot \, \right\|_X$ and $\left\| \, \cdot \, \right\|_Y$, respectively. 
    Let $f : \Omega \to Y$ be a function, where $\Omega$ is an open subset of $X$.
    If $f$ is Fr\'echet differentiable at $x \in \Omega$, then $f$ is specularly Fr\'echet differentiable at $x$ with $\widehat{D} f(x) = Df(x)$.
\end{proposition}

\begin{proof}
    Let $\varepsilon > 0$ be arbitrary.
    Then, there exist $\ell \in \mathcal{L}(X; Y)$ and $\delta > 0$ such that, if $0 < \left\| w \right\|_X < \delta$, then 
    \begin{displaymath}
        \left\| \dfrac{f(x + w) - f(x) - \ell(w)}{\left\| w \right\|_X} \right\|_Y < \dfrac{\varepsilon}{2} \cdot \dfrac{\left\| J \right\|_{X \times Y} + \left\| K \right\|_{X \times Y}}{\left\| J \right\|_{X \times Y}}
    \end{displaymath}
    and 
    \begin{displaymath}
        \left\| \dfrac{f(x - w) - f(x) - \ell(-w)}{\left\| -w \right\|_X} \right\|_Y < \dfrac{\varepsilon}{2} \cdot \dfrac{\left\| J \right\|_{X \times Y} + \left\| K \right\|_{X \times Y}}{\left\| K \right\|_{X \times Y}},
    \end{displaymath}
    where $J$ and $K$ are defined as in \eqref{def: points JK}.
    Hence, if $0 < \left\| w \right\|_X < \delta$, then
    \begin{align*}
        & \left\| \dfrac{(f(x + w) - f(x) - \ell(w)) \left\| J \right\|_{X \times Y} + (f(x) - f(x - w) - \ell(w)) \left\| K \right\|_{X \times Y}}{\left\| w \right\|_X (\left\| J \right\|_{X \times Y} + \left\| K \right\|_{X \times Y})} \right\|_Y \\
        \leq& \left( \dfrac{\left\| J \right\|_{X \times Y}}{\left\| J \right\|_{X \times Y} + \left\| K \right\|_{X \times Y}} \right) \left\| \dfrac{(f(x + w) - f(x) - \ell(w))}{\left\| w \right\|_X} \right\|_Y \\ 
        & + \left( \dfrac{\left\| K \right\|_{X \times Y}}{\left\| J \right\|_{X \times Y} + \left\| K \right\|_{X \times Y}} \right) \left\| \dfrac{f(x) - f(x - w) - \ell(w)}{\left\| w \right\|_X} \right\|_Y    \\
        <& \, \varepsilon.
    \end{align*}
    Since $\varepsilon > 0$ is arbitrary, we conclude that $f$ is specularly Fr\'echet differentiable at $x$.
    Moreover, by the uniqueness of the specular Fr\'echet differential, the Fr\'echet differential and specular Fr\'echet differential are equal.
\end{proof}

Specular Fr\'echet differentiability implies specular G\^ateaux differentiability.

\begin{proposition} \label{prop:spF_implies_spG}
    Let $X$ and $Y$ be normed vector spaces over $\mathbb{R}$ equipped with norms $\left\| \, \cdot \, \right\|_X$ and $\left\| \, \cdot \, \right\|_Y$, respectively. 
    Let $f : \Omega \to Y$ be a function, where $\Omega$ is an open subset of $X$.
    If $f$ is specularly Fr\'echet differentiable at $x \in \Omega$, then $f$ is specularly G\^ateaux differentiable at $x$ in any direction $v \in X \setminus \left\{ 0 \right\}$ with $\partial_v^{\sd} f(x) = \widehat{D} f(x) (v)$.
\end{proposition}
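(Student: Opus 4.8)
The plan is to restrict the specular Fr\'echet limit \eqref{eq: specularly Frechet differentiability}, which a priori runs over all $w$ with $\left\| w \right\|_X \to 0$, to the single ray $w = hv$ with $h \searrow 0$, and to verify that along this ray the specular Fr\'echet difference quotient reduces exactly to the quotient whose one-sided limit defines $\partial_v^{\sd} f(x)$. Fix $v \in X \setminus \left\{ 0 \right\}$ and put $w = hv$ for $h > 0$. Since $\left\| w \right\|_X = h \left\| v \right\|_X \to 0$ as $h \searrow 0$, the defining property of $\ell := D^s f(x)$ holds along this path.

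First I would match the auxiliary points. Taking $w = hv$ in \eqref{def: points JK} and comparing with \eqref{def: points LCR}, one sees that $J = C - L$ and $K = R - C$, so that $\left\| J \right\|_{X \times Y} = \left\| L - C \right\|_{X \times Y}$ and $\left\| K \right\|_{X \times Y} = \left\| C - R \right\|_{X \times Y}$; both are strictly positive because $v \neq 0$ and $h > 0$, so the denominator $\left\| J \right\|_{X \times Y} + \left\| K \right\|_{X \times Y}$ never vanishes. Next I would carry out the algebraic simplification. Using linearity, $\ell(hv) = h \ell(v)$, so the term $\ell(hv) / (h \left\| v \right\|_X)$ equals $\ell(v) / \left\| v \right\|_X$ and factors out of both summands in \eqref{eq: specularly Frechet differentiability}. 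Writing
\[
  S(h) := \frac{(f(x+hv)-f(x))\,\left\| L - C \right\|_{X \times Y} + (f(x)-f(x-hv))\,\left\| C - R \right\|_{X \times Y}}{h\left(\left\| L - C \right\|_{X \times Y} + \left\| C - R \right\|_{X \times Y}\right)},
\]
a direct computation shows that the quantity inside the limit \eqref{eq: specularly Frechet differentiability}, evaluated at $w = hv$, equals $\left\| v \right\|_X^{-1}\,\left\| S(h) - \ell(v) \right\|_Y$. Since this tends to $0$ and $\left\| v \right\|_X > 0$ is a fixed constant, it follows that $S(h) \to \ell(v)$ as $h \searrow 0$. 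But $S(h)$ is precisely the expression whose one-sided limit defines $\partial_v^{\sd} f(x)$ in \eqref{eq:spG}; hence $\partial_v^{\sd} f(x)$ exists and equals $\ell(v) = D^s f(x)(v)$.

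The hard part will be the bookkeeping in the algebraic step: after extracting $\ell(v) / \left\| v \right\|_X$, one must check that the weighted combination of the forward and backward quotients reassembles into $S(h)$ with the correct weights $\left\| L - C \right\|_{X \times Y}$ and $\left\| C - R \right\|_{X \times Y}$, which is exactly where the identifications $\left\| J \right\|_{X \times Y} = \left\| L - C \right\|_{X \times Y}$ and $\left\| K \right\|_{X \times Y} = \left\| C - R \right\|_{X \times Y}$ from the first step are indispensable. Everything else amounts to the routine fact that a limit taken over $\left\| w \right\|_X \to 0$ restricts to any particular approach path, here $w = hv$, together with the observation that dividing by the fixed positive scalar $\left\| v \right\|_X$ does not affect whether the limit is zero.
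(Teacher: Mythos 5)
Your proposal is correct and follows essentially the same route as the paper: restrict the specular Fréchet limit to the path $w = hv$, identify $\left\| J \right\|_{X \times Y} = \left\| L - C \right\|_{X \times Y}$ and $\left\| K \right\|_{X \times Y} = \left\| C - R \right\|_{X \times Y}$, use $\ell(hv) = h\ell(v)$, and multiply through by the fixed constant $\left\| v \right\|_X$ to recover exactly the difference quotient defining $\partial_v^{\sd} f(x)$. The only difference is cosmetic: you make the path-restriction and the matching of $J, K$ with $C - L, R - C$ explicit, whereas the paper performs the substitution silently.
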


\begin{proof}
    Let $v \in X \setminus \left\{ 0 \right\}$ be fixed. 
    Choose $w = hv$ for sufficiently small $h > 0$ in \eqref{eq: specularly Frechet differentiability} to obtain
    \begin{multline*}
      \lim_{h \searrow 0} \left\| \left( \dfrac{f(x + hv) - f(x) - h \, \widehat{D} f(x) (v)}{h \left\| v \right\|_X} \right) \dfrac{\left\| U \right\|_{X \times Y}}{\left\| U \right\|_{X \times Y} + \left\| V \right\|_{X \times Y}}  \right. \\
       \left. + \left( \dfrac{f(x) - f(x - hv) - h \, \widehat{D} f(x) (v)}{h \left\| v \right\|_X} \right) \dfrac{\left\| V \right\|_{X \times Y}}{\left\| U \right\|_{X \times Y} + \left\| V \right\|_{X \times Y}} \right\|_Y = 0,
    \end{multline*}
    where $U$ and $V$ are defined as in \eqref{def:pt_UV}.
    By multiplying both sides by $\left\| v \right\|_X$, we obtain
    \begin{multline*}
      \lim_{h \searrow 0} \left\| \left( \dfrac{f(x + hv) - f(x)}{h} \right) \dfrac{\left\| U \right\|_{X \times Y}}{\left\| U \right\|_{X \times Y} + \left\| V \right\|_{X \times Y}} \right. \\
      \left.+ \left( \dfrac{f(x) - f(x - hv)}{h} \right) \dfrac{\left\| V \right\|_{X \times Y}}{\left\| U \right\|_{X \times Y} + \left\| V \right\|_{X \times Y}} - \ell(v) \right\|_Y = 0,
    \end{multline*}
    as required.
\end{proof}

\subsection{Real-valued functions}

The definition of specular Fr\'echet differentiability can be represented in terms of the function $\mathcal{A}$ defined in \eqref{def:A}.

\begin{lemma}   \label{lem: representation with A of the specular Frechet differentials}
    Let $X$ be a normed vector space over $\mathbb{R}$ equipped with a norm $\left\| \, \cdot \, \right\|_X$.
    Let $f: \Omega \to \mathbb{R}$ be a function, where $\Omega$ is an open subset of $X$.
    Then, $f$ is specularly Fr\'echet differentiable at $x \in \Omega$ if and only if there exists $\ell \in X^{\ast}$ such that 
    \begin{displaymath}
        \lim_{\left\| w \right\|_X \to 0} \left\vert \, \mathcal{A} \left( \dfrac{f(x + w) - f(x)}{\left\| w \right\|_X}, \dfrac{f(x) - f(x - w)}{\left\| w \right\|_X} \right) - \dfrac{\left\langle \ell, w \right\rangle}{\left\| w \right\|_X} \right\vert = 0.
    \end{displaymath}
\end{lemma}

\begin{proof}
    For $J$ and $K$ defined as in \eqref{def: points JK}, the fraction in \eqref{eq: specularly Frechet differentiability} can be rewritten as follows:
    \begin{align*}
        & \dfrac{ \left\vert (f(x + w) - f(x) - \left\langle \ell, w \right\rangle) \left\| J \right\|_{X \times \mathbb{R}} + (f(x) - f(x - w) - \left\langle \ell, w \right\rangle) \left\| K \right\|_{X \times \mathbb{R}} \right\vert  }{\left\| w \right\|_X (\left\| J \right\|_{X \times \mathbb{R}} + \left\| K \right\|_{X \times \mathbb{R}})} \\
        =& \left\vert \dfrac{ (f(x + w) - f(x)) \left\| J \right\|_{X \times \mathbb{R}} + (f(x) - f(x - w)) \left\| K \right\|_{X \times \mathbb{R}} }{\left\| w \right\|_X \left\| J \right\|_{X \times \mathbb{R}} + \left\| w \right\|_X \left\| K \right\|_{X \times \mathbb{R}}} - \dfrac{\left\langle \ell, w \right\rangle}{\left\| w \right\|_X} \right\vert   \\
        =& \left\vert \, \mathcal{B} \left( f(x + w) - f(x), f(x) - f(x - w), \left\| w \right\|_X \right) - \dfrac{\left\langle \ell, w \right\rangle}{\left\| w \right\|_X} \right\vert   \\
        =& \left\vert \, \mathcal{A} \left( \dfrac{f(x + w) - f(x)}{\left\| w \right\|_X}, \dfrac{f(x) - f(x - w)}{\left\| w \right\|_X} \right) - \dfrac{\left\langle \ell, w \right\rangle}{\left\| w \right\|_X} \right\vert,
    \end{align*}
    by applying the equality in \cref{eq:ABC}.
    Taking the limit $\left\| w \right\|_X \to 0$ completes the proof.
\end{proof}

Norms are specularly Fr\'echet differentiable at the origin.

\begin{example}
    Let $X$ be a normed vector space over $\mathbb{R}$ equipped with a norm $\left\| \, \cdot \, \right\|_X$, and let $Y = \mathbb{R}$ be equipped with the absolute value norm $\left\vert \, \cdot \, \right\vert$.
    Let $f : X \to \mathbb{R}$ be defined by 
    \begin{displaymath}
        f(x) = \left\| x \right\|_X,
    \end{displaymath}
    for $x \in X$.
    Then, it is well-known that, if $x \neq 0$, then $f$ is Fr\'echet differentiable at $x$ with 
    \begin{displaymath}
        Df(x)= \dfrac{x}{\left\| x \right\|_X}.
    \end{displaymath}
    However, $f$ is not Fr\'echet differentiable at $x = 0$.

    We claim that $f$ is specularly Fr\'echet differentiable at $x = 0$ with $\widehat{D} f(0) = 0$, that is, 
    \begin{displaymath}
        \left\langle \widehat{D} f(0), w \right\rangle = 0 
    \end{displaymath}
    for all $w \in X$.
    Indeed, at $x = 0$, the distance can be reduced to 
    \begin{displaymath}
        \left\| J \right\|_{X \times \mathbb{R}} = \sqrt{2} \left\| w \right\|_X = \left\| K \right\|_{X \times \mathbb{R}},
    \end{displaymath}
    where $J$ and $K$ are defined as in \eqref{def: points JK}.
    Thus, the fraction in \eqref{eq: specularly Frechet differentiability} is reduced to 
    \begin{displaymath}
        \dfrac{\left\vert \left\| w \right\|_X - \left\| 0 \right\|_X - \left\langle \widehat{D} f(0), w \right\rangle + \left\| 0 \right\|_X - \left\| -w \right\|_X - \left\langle \widehat{D} f(0), w \right\rangle \right\vert }{2 \left\| w \right\|_X} = 0.
    \end{displaymath}
    Taking the limit $\left\| w \right\|_X \to 0$ completes the proof of the claim. 
\end{example}

A weaker version of the necessary condition for optimality can be obtained using specular gradients in Hilbert spaces. 

\begin{theorem} \label{thm:nec_cond_opt}
    Let $H$ be a Hilbert space over $\mathbb{R}$ equipped with a norm $\left\| \, \cdot \, \right\|_H$ and an inner product $\left\langle \, \cdot \, , \, \cdot \, \right\rangle _H$.
    Assume that a functional $f : \Omega \to \mathbb{R}$ is specularly Fr\'echet differentiable in $\Omega$, where $\Omega$ is an open subset of $H$.
    If $x^{\ast} \in \Omega$ is a local minimizer or maximizer of $f$, it holds that 
    \begin{displaymath}
        \left\vert \left\langle \sg f\left(x^{\ast}\right), v\right\rangle_H  \right\vert \leq \left\| v \right\|_H
    \end{displaymath}
    for each $v \in H$.
\end{theorem}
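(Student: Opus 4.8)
The plan is to combine the Quasi-Fermat's Theorem for specular G\^ateaux derivatives (\cref{thm: quasi-Fermat}) with the Riesz-type identity defining the specular gradient. The argument is short, since the substantive work has already been carried out in the preceding results; what remains is to chain them together correctly and to translate a statement about the specular G\^ateaux derivative into one about the specular gradient.

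First I would verify that the hypotheses of \cref{thm: quasi-Fermat} are met. Since $f$ is specularly Fr\'echet differentiable throughout $\Omega$, \cref{prop:spF_implies_spG} applies at every point and shows that $f$ is specularly G\^ateaux differentiable in $\Omega$, which is precisely the standing assumption of \cref{thm: quasi-Fermat}. Moreover, that same proposition supplies the pointwise identity $\partial_v^{\sd} f(x) = D^s f(x)(v)$ for every $x \in \Omega$ and every $v \in H \setminus \{0\}$, with the case $v = 0$ holding trivially by convention.

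Next, because $x^{\ast}$ is a local extreme point of $f$, \cref{thm: quasi-Fermat} yields the bound $\left\vert \partial_v^{\sd} f(x^{\ast}) \right\vert \leq \left\| v \right\|_H$ for each $v \in H$. It then remains to rewrite $\partial_v^{\sd} f(x^{\ast})$ in terms of the specular gradient. Using the identity from the previous step together with the duality-pairing convention $\langle \ell, v \rangle = \ell(v)$ and the defining property \eqref{eq: specular gradient} of the specular gradient, I obtain
\[
    \partial_v^{\sd} f(x^{\ast}) = D^s f(x^{\ast})(v) = \left\langle D^s f(x^{\ast}), v \right\rangle = \left\langle \nabla^s f(x^{\ast}), v \right\rangle_H .
\]
Substituting this into the bound from \cref{thm: quasi-Fermat} gives $\left\vert \left\langle \nabla^s f(x^{\ast}), v \right\rangle_H \right\vert \leq \left\| v \right\|_H$ for each $v \in H$, which is the claimed inequality.

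Because every step invokes an already-established result, there is no genuine analytic obstacle in this proof. The only point requiring a moment's care is the logical bookkeeping at the outset: one must confirm that \emph{specular Fr\'echet differentiability in $\Omega$} indeed delivers the \emph{specular G\^ateaux differentiability in $\Omega$} demanded by \cref{thm: quasi-Fermat}, and this is exactly \cref{prop:spF_implies_spG} applied at each point of $\Omega$. Once that is in place, the result is a direct corollary.
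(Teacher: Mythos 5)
Your argument is correct and is exactly the route the paper takes: its proof is the one-line ``Applying \cref{thm: quasi-Fermat} and \cref{prop:spF_implies_spG} completes the proof,'' and your write-up simply makes explicit the same chaining through the identity $\partial_v^{\sd} f(x^{\ast}) = \left\langle D^s f(x^{\ast}), v \right\rangle = \left\langle \nabla^s f(x^{\ast}), v \right\rangle_H$. No gaps.
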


\begin{proof}
    Applying \cref{thm:quasi-Fermat} and \cref{prop:spF_implies_spG} completes the proof.
\end{proof}

\subsection{Fr\'echet subdifferential}

The specular Fr\'echet differential of a convex function belongs to a Fr\'echet subdifferential of the function.

\begin{theorem} \label{thm:sFd_is_Fs}
    Let $X$ be a normed vector space over $\mathbb{R}$ equipped with a norm $\left\| \, \cdot \, \right\|$.
    If $f : X \to \overline{\mathbb{R}}$ is proper, convex, lower semi-continuous, and specularly Fr\'echet differentiable at $x$ in the interior of $\dom (f)$, then $\widehat{D} f(x) \in \widehat{\partial} f(x)$.
    In addition, if $X$ is a reflexive Banach space, then $\widehat{D} f(x) \in \partial f(x)$.
\end{theorem}

\begin{proof}
    By \cref{lem: representation with A of the specular Frechet differentials}, there exists $\ell \in X^{\ast}$ such that
    \begin{displaymath}
        \lim_{\left\| w \right\| \to 0} \left( \mathcal{A} \left( \dfrac{f(x + w) - f(x)}{\left\| w \right\|}, \dfrac{f(x) - f(x - w)}{\left\| w \right\|} \right) - \dfrac{\left\langle \ell, w \right\rangle}{\left\| w \right\|} \right) = 0.
    \end{displaymath}
    Let $w \in X$ be such that $\left\| w \right\| > 0$.
    The convexity of $f$ implies that 
    \begin{displaymath}
        f(x) \leq \dfrac{f(x + w) + f(x - w)}{2},
    \end{displaymath}
    and hence
    \begin{displaymath}
        \dfrac{f(x) - f(x - w)}{\left\| w \right\|} \leq \dfrac{f(x + w) - f(x)}{\left\| w \right\|}.
    \end{displaymath}
    Applying \cite[Lem. A.3 (d)]{2026a_Jung}, we obtain that 
    \begin{displaymath}
        \mathcal{A} \left( \dfrac{f(x + w) - f(x)}{\left\| w \right\|}, \dfrac{f(x) - f(x - w)}{\left\| w \right\|} \right) - \dfrac{\left\langle \ell, w \right\rangle}{\left\| w \right\|} 
        \leq \dfrac{f(x + w) - f(x) - \left\langle \ell, w \right\rangle}{\left\| w \right\|}.
    \end{displaymath}
    Taking the limit $\left\| w \right\| \to 0$ yields that $\widehat{D} f(x) \in \widehat{\partial} f(x)$.

    Furthermore, if $X$ is a reflexive Banach space, then $X$ is Fr\'echet smooth by \cite[Prop. 4.7.14]{2007_Schirotzek_BOOK}, and hence $\widehat{\partial} f(x) = \partial f(x)$ by \cite[Prop. 9.1.9 (d)]{2007_Schirotzek_BOOK}.
    Therefore, the conclusion follows immediately.
\end{proof}

\bibliographystyle{siamplain}
\bibliography{reference}{}

@Book{1970_Rockafellar_BOOK,
  author     = {Rockafellar, R. Tyrrell},
  title      = {Convex analysis},
  pages      = {xviii+451},
  publisher  = {Princeton University Press, Princeton, NJ},
  series     = {Princeton Mathematical Series},
  volume     = {No. 28},
  mrclass    = {26.52 (46.00)},
  mrnumber   = {274683},
  mrreviewer = {Ky\ Fan},
  year       = {1970},
}

@Book{2013_Drabek_BOOK,
  author     = {Dr\'abek, Pavel and Milota, Jaroslav},
  title      = {Methods of nonlinear analysis},
  doi        = {10.1007/978-3-0348-0387-8},
  edition    = {2nd},
  isbn       = {978-3-0348-0386-1; 978-3-0348-0387-8},
  pages      = {x+649},
  publisher  = {Birkh\"auser/Springer Basel AG, Basel},
  mrclass    = {47Jxx (47Hxx 47N20 49K20 58Exx 58Jxx)},
  mrnumber   = {3025694},
  mrreviewer = {Peter\ E.\ Zhidkov},
  year       = {2013},
}

@Book{1969_Luenberger,
  author     = {Luenberger, David G.},
  title      = {Optimization by vector space methods},
  pages      = {xvii+326},
  publisher  = {John Wiley \& Sons, Inc., New York-London-Sydney},
  mrclass    = {65.30 (93.00)},
  mrnumber   = {238472},
  mrreviewer = {J.\ Legras},
  year       = {1969},
}

@Book{2003_Adams,
  author    = {Adams, Robert A. and Fournier, John J. F.},
  title     = {Sobolev spaces},
  doi       = {10.1016/S0079-8169(03)X8001-0},
  edition   = {2nd},
  isbn      = {0-12-044143-8},
  pages     = {xiv+305},
  publisher = {Elsevier/Academic Press, Amsterdam},
  series    = {Pure and Applied Mathematics (Amsterdam)},
  volume    = {140},
  mrclass   = {46E35 (46-01 46-02 46B70 46Exx)},
  mrnumber  = {2424078},
  year      = {2003},
}

@Book{2026c_Jung,
  author    = {Jung, Kiyuob},
  title     = {Specular gradient methods for nonsmooth convex optimization in {E}uclidean spaces: a subgradient selection strategy},
  publisher = {preprint, arXiv:2605.25490 [math.OC]},
  url       = {https://arxiv.org/abs/2605.25490},
  year      = {2026},
}

@Book{1994_Bruckner_BOOK,
  author    = {Bruckner, A. M.},
  title     = {Differentiation of real functions},
  doi       = {10.1090/crmm/005},
  edition   = {2nd},
  isbn      = {0-8218-6990-6},
  pages     = {xii+195},
  publisher = {American Mathematical Society, Providence, RI},
  series    = {CRM Monograph Series},
  volume    = {5},
  mrclass   = {26-01},
  mrnumber  = {1274044},
  year      = {1994},
}

@Article{1970_Broyden,
  author     = {Broyden, C. G.},
  title      = {The convergence of a class of double-rank minimization algorithms. {II}. {T}he new algorithm},
  issn       = {0020-2932},
  pages      = {222--231},
  volume     = {6},
  fjournal   = {Journal of the Institute of Mathematics and its Applications},
  journal    = {J. Inst. Math. Appl.},
  mrclass    = {65K05},
  mrnumber   = {433870},
  mrreviewer = {Y.\ Cherruault},
  year       = {1970},
}

@Article{1970_Shanno,
  author     = {Shanno, D. F.},
  title      = {Conditioning of quasi-{N}ewton methods for function minimization},
  doi        = {10.2307/2004840},
  issn       = {0025-5718,1088-6842},
  pages      = {647--656},
  volume     = {24},
  fjournal   = {Mathematics of Computation},
  journal    = {Math. Comp.},
  mrclass    = {90.58},
  mrnumber   = {274029},
  mrreviewer = {J.\ S.\ Kowalik},
  year       = {1970},
}

@Book{2018_Nesterov_BOOK,
  author     = {Nesterov, Yurii},
  title      = {Lectures on convex optimization},
  doi        = {10.1007/978-3-319-91578-4},
  isbn       = {978-3-319-91577-7; 978-3-319-91578-4},
  pages      = {xxiii+589},
  publisher  = {Springer, Cham},
  series     = {Springer Optimization and Its Applications},
  volume     = {137},
  groups     = {Optimization, Convexity},
  mrclass    = {90-01 (90C25)},
  mrnumber   = {3839649},
  mrreviewer = {Giorgio Giorgi},
  year       = {2018},
}

@Article{1970_Fletcher,
  author  = {Fletcher, R.},
  title   = {A new approach to variable metric algorithms},
  doi     = {10.1093/comjnl/13.3.317},
  issn    = {0010-4620},
  number  = {3},
  pages   = {317-322},
  volume  = {13},
  journal = {The Computer Journal},
  month   = {01},
  year    = {1970},
}

@Book{1990_Clarke_BOOK,
  author    = {Clarke, F. H.},
  title     = {Optimization and nonsmooth analysis},
  doi       = {10.1137/1.9781611971309},
  edition   = {2nd},
  isbn      = {0-89871-256-4},
  pages     = {xii+308},
  publisher = {Society for Industrial and Applied Mathematics (SIAM), Philadelphia, PA},
  series    = {Classics in Applied Mathematics},
  volume    = {5},
  mrclass   = {49-02 (01A75 49J52 58C20 90C48)},
  mrnumber  = {1058436},
  year      = {1990},
}

@Article{1967_Aull,
  author     = {Aull, C. E.},
  title      = {The first symmetric derivative},
  doi        = {10.2307/2314269},
  issn       = {0002-9890},
  pages      = {708--711},
  volume     = {74},
  fjournal   = {American Mathematical Monthly},
  journal    = {Amer. Math. Monthly},
  mrclass    = {26.40},
  mrnumber   = {212138},
  mrreviewer = {A. M. Bruckner},
  year       = {1967},
}

@Book{1998_Rockafellar,
  author     = {Rockafellar, R. Tyrrell and Wets, Roger J.-B.},
  title      = {Variational analysis},
  doi        = {10.1007/978-3-642-02431-3},
  isbn       = {3-540-62772-3},
  pages      = {xiv+733},
  publisher  = {Springer-Verlag, Berlin},
  volume     = {317},
  mrclass    = {49-02 (46N10 47N10 49J52 49K40 90C30)},
  mrnumber   = {1491362},
  mrreviewer = {Francis\ H.\ Clarke},
  year       = {1998},
}

@Book{2004_Boyd_BOOK,
  author     = {Boyd, Stephen and Vandenberghe, Lieven},
  title      = {Convex optimization},
  doi        = {10.1017/CBO9780511804441},
  isbn       = {0-521-83378-7},
  pages      = {xiv+716},
  publisher  = {Cambridge University Press, Cambridge},
  mrclass    = {90-01 (90C05 90C25 90C46 90C51)},
  mrnumber   = {2061575},
  mrreviewer = {Dan Butnariu},
  year       = {2004},
}

@Article{2003_Kruger,
  author     = {Kruger, A. Ya.},
  title      = {On {F}r\'echet subdifferentials},
  doi        = {10.1023/A:1023673105317},
  issn       = {1072-3374},
  note       = {Optimization and related topics, 3},
  number     = {3},
  pages      = {3325--3358},
  volume     = {116},
  fjournal   = {Journal of Mathematical Sciences (New York)},
  journal    = {J. Math. Sci. (N.Y.)},
  mrclass    = {49J52 (46G05 49-02 58C20)},
  mrnumber   = {1995438},
  mrreviewer = {Philip\ D.\ Loewen},
  year       = {2003},
}

@Book{2026s_Jung_SIAM,
  author    = {Jung, Kiyuob},
  title     = {{specular-differentiation}},
  doi       = {10.5281/zenodo.18246734},
  publisher = {v1.2.0, Zenodo},
  url       = {https://github.com/kyjung2357/specular-differentiation},
  year      = {2026},
}

@Article{2006_Mordukhovich,
  author     = {Mordukhovich, B. S. and Nam, N. M. and Yen, N. D.},
  title      = {Fr\'echet subdifferential calculus and optimality conditions in nondifferentiable programming},
  doi        = {10.1080/02331930600816395},
  issn       = {0233-1934,1029-4945},
  number     = {5-6},
  pages      = {685--708},
  volume     = {55},
  fjournal   = {Optimization. A Journal of Mathematical Programming and Operations Research},
  journal    = {Optimization},
  mrclass    = {49J52 (90C46)},
  mrnumber   = {2274933},
  mrreviewer = {Matteo\ Rocca},
  year       = {2006},
}

@Book{2006_Mordukhovich_BOOK,
  author     = {Mordukhovich, Boris S.},
  title      = {Variational analysis and generalized differentiation. {I}},
  doi        = {10.1007/3-540-31247-1},
  isbn       = {978-3-540-25437-9; 3-540-25437-4},
  pages      = {xxii+579},
  publisher  = {Springer-Verlag, Berlin},
  series     = {Grundlehren der mathematischen Wissenschaften},
  volume     = {330},
  mrclass    = {49-02 (49J52 49J53 49K27 49K40 90C31)},
  mrnumber   = {2191744},
  mrreviewer = {Lionel\ Thibault},
  year       = {2006},
}

@MastersThesis{2023_Jung,
  author  = {Jung, Kiyuob},
  title   = {The specular derivative and its application to the wave equation},
  type    = {Master's thesis},
  url     = {http://www.dcollection.net/handler/knu/000000102954},
  address = {Daegu, Republic of Korea},
  school  = {Kyungpook National University Graduate School},
  year    = {2023},
}

@Book{2017_Kingma,
  author    = {Diederik P. Kingma and Jimmy Ba},
  title     = {Adam: A Method for Stochastic Optimization},
  publisher = {preprint, arXiv:1412.6980 [cs.LG]},
  url       = {https://arxiv.org/abs/1412.6980},
  year      = {2017},
}

@Book{2026a_Jung,
  author    = {Jung, Kiyuob},
  title     = {Nonlinear numerical schemes using specular differentiation for initial value problems of first-order ordinary differential equations},
  publisher = {preprint, arXiv:2601.09900 [math.NA]},
  url       = {https://arxiv.org/abs/2601.09900},
  year      = {2026},
}

@Article{1970_Goldfarb,
  author   = {Goldfarb, Donald},
  title    = {A family of variable-metric methods derived by variational means},
  doi      = {10.2307/2004873},
  issn     = {0025-5718,1088-6842},
  pages    = {23--26},
  volume   = {24},
  fjournal = {Mathematics of Computation},
  journal  = {Math. Comp.},
  mrclass  = {65.30},
  mrnumber = {258249},
  year     = {1970},
}

@Book{2007_Schirotzek_BOOK,
  author     = {Schirotzek, Winfried},
  title      = {Nonsmooth analysis},
  doi        = {10.1007/978-3-540-71333-3},
  isbn       = {978-3-540-71332-6; 3-540-71332-8},
  pages      = {xii+373},
  publisher  = {Springer, Berlin},
  series     = {Universitext},
  mrclass    = {49-01 (49J52 49J53 90C48)},
  mrnumber   = {2330778},
  mrreviewer = {G\'erard\ Lebourg},
  year       = {2007},
}

\end{document}